\def\BibTeX{{\rm B\kern-.05em{\sc i\kern-.025em b}\kern-.08em
    T\kern-.1667em\lower.7ex\hbox{E}\kern-.125emX}}
\numberwithin{equation}{section}
\newcommand{\R}{\mathbb{R}}
\newcommand{\N}{\mathbb{N}}
\newcommand{\F}{\mathcal{F}}
\renewcommand{\P}{\mathbb{P}}
\newcommand{\E}{\mathbb{E}}
\newcommand{\e}{\varepsilon}
\newcommand{\1}{\mathbbm{1}}
\newtheorem{Theorem}{Theorem}[section]
\newtheorem{Proposition}[Theorem]{Proposition}
\newtheorem{Corollary}[Theorem]{Corollary}
\newtheorem{Lemma}[Theorem]{Lemma}
\newtheorem{Remark}[Theorem]{Remark}
\newtheorem{Definition}[Theorem]{Definition}
\newtheorem{Example}[Theorem]{Example}
\begin{document}

\title[On a class of SPDEs with multiple invariant measures]{On a class of stochastic partial differential equations with multiple invariant measures}

\author{B\'alint Farkas}
\address[B\'alint Farkas]{School of Mathematics and Natural Sciences\\ University of Wuppertal, Germany}
\email[B\'alint Farkas]{farkas@math.uni-wuppertal.de}

\author{Martin Friesen}
\address[Martin Friesen]{School of Mathematics and Natural Sciences\\ University of Wuppertal, Germany}
\email[Martin Friesen]{friesen@math.uni-wuppertal.de}

\author{Barbara R\"udiger}
\address[Barbara R\"udiger]{School of Mathematics and Natural Sciences\\ University of Wuppertal, Germany}
\email[Barbara R\"udiger]{ruediger@uni-wuppertal.de}

\author{Dennis Schroers}
\address[Dennis Schroers]{Department of Mathematics, University of Oslo}
\email[Dennis Schroers]{dennissc@math.uio.no}

\date{\today}

\subjclass[2010]{Primary 37L40, 60J25; Secondary 37A30,	37L55}

\keywords{}

\begin{abstract}
 In this work we investigate the long-time behavior, that is the existence and characterization of invariant measures as well as convergence of transition probabilities, for Markov processes obtained as the unique mild solution to stochastic partial differential equations in a Hilbert space. Contrary to the existing literature where typically uniqueness of invariant measures is studied, we focus on the case where uniqueness of invariant measures fails to hold. Namely, using a \textit{generalized dissipativity condition} combined with a decomposition of the Hilbert space, we prove the existence of multiple limiting distributions in dependence of the initial state of the process, and study convergence of transition probabilities in the Wasserstein 2-distance. Finally, we show that these results contain L\'evy driven Ornstein-Uhlenbeck processes, the Heath-Jarrow-Morton-Musiela equation as well as stochastic partial differential equations with delay as a particular case.
\end{abstract}

\maketitle

\allowdisplaybreaks

\section{Introduction}

Stochastic partial differential equations arise in the modelling of applications in mathematical physics (e.g.{} Navier-Stokes equations \cite{MR2259251, MR1300150, MR2005200, MR2269220} or stochastic non-linear Schr\"odinger equations \cite{MR3474409, MR1954077}),
biology (e.g. catalytic branching processes \cite{MR1921744, MR1653845}), and finance (e.g. forward prices \cite{HJMM92, Tehranchi2005, filipovic2009term}). While the construction of solutions to the underlying stochastic equations is an important mathematical issue, having applications in mind it is indispensable to also study their specific properties. Among them, an investigation of the long-time behavior of solutions, that is existence and uniqueness of invariant measures and convergence of transition probabilities, are often important and at the same time also challenging mathematical topics.
In this work we investigate the long-time behavior of mild solutions to the stochastic partial differential equation of the form
\begin{equation}\label{SPDE}
dX_t=(AX_t+F(X_t)) dt + \sigma(X_t)dW_t + \int_E\gamma(X_{t},\nu)\widetilde{N}(dt,d\nu), \qquad t \geq 0
\end{equation}
on a separable Hilbert space $H$, where $(A,D(A))$ is the generator of a strongly continuous semigroup $(S(t))_{t \geq 0}$ on $H$, $(W_t)_{t \geq 0}$ is a $Q$-Wiener process and $\widetilde{N}(dt,d\nu)$ denotes a compensated Poisson random measure. The precise conditions need to be imposed on these objects will be formulated in the subsequent sections. 

In the literature the study on the existence and uniqueness of invariant measures often relies on different variants of a \textit{dissipativity condition}. The simplest form of such a dissipativity condition is: 
There exists $\alpha > 0$ such that
\begin{align}\label{eq: dissipativity condition}
 \langle Ax - Ay,x-y\rangle_H + \langle F(x) - F(y), x-y \rangle_H \leq - \alpha \| x-y \|_H^2, \qquad x,y \in D(A).
\end{align}
Indeed, if \eqref{eq: dissipativity condition} is satisfied, $\sigma$ and $\gamma$ are globally Lipschitz-continuous, and $\alpha$ is large enough, then there exists a unique invariant measure for the Markov process obtained from \eqref{SPDE}, see, e.g., \cite[Section 16]{peszat2007stochastic}, \cite[Chapter 11, Section 6]{da2014stochastic}, and \cite{Rusinek10}. Note that \eqref{eq: dissipativity condition} is satisfied, if $F$ is globally Lipschitz continuous and $(A,D(A))$ satisfies for some $\beta > 0$ large enough the inequality $\langle Ax,x\rangle_H \leq - \beta \|x\|_H^2$, $x \in D(A)$, i.e. $(A,D(A))$ is the generator of a strongly continuous semigroup satisfying $\| S(t) \|_{L(H)} \leq \mathrm{e}^{- \beta t}$.
Here and below we denote by $L(H)$ the space of bounded linear operators from $H$ to $H$ and by $\| \cdot \|_{L(H)}$ its operator norm.
For weaker variants of the dissipativity condition
(e.g. cases where \eqref{eq: dissipativity condition} only holds for $\|x\|_H, \| y\|_H \geq R$ for some $R > 0$), in general one can neither guarantee the existence nor uniqueness of an invariant measure. Hence, to treat such cases, additional arguments, e.g. coupling methods, are required. Such arguments have been applied to different stochastic partial differential equations on Hilbert spaces in \cite{MR2860444, MR2836539, MR2985090} where existence and, in particular, uniqueness of invariant measures was studied. 
We also mention \cite{MR3178490, MR2857021} for an extension of Harris-type theorems for Wasserstein distances, and \cite{MR3800835, MR2773030} for extensions of coupling methods. 

In contrast to the aforementioned methods and applications, several stochastic models exhibit phase transition phenomena where uniqueness of invariant measures fails to hold. For instance, the generator $(A,D(A))$ and drift $F$ appearing in the Heath-Jarrow-Morton-Musiela equation do not satisfy \eqref{eq: dissipativity condition}, but instead $F$ is globally Lipschitz continuous and the semigroup generated by $(A,D(A))$ satisfies $\| S(t)x - Px\|_H \leq \mathrm{e}^{- \alpha t}\| x - Px\|_H$ for some projection operator $P$. Based on this property it was shown in \cite{Tehranchi2005, Rusinek10} that
the Heath-Jarrow-Morton-Musiela equation has infinitely many invariant measures parametrized by the initial state of the process, see also Section 6. Another example is related to stochastic Volterra equations as studied, e.g., in \cite{BDK19}. There, using a representation of stochastic Volterra equations via SPDEs and combined with some arguments originated from the study of the Heath-Jarrow-Morton-Musiela equation, the authors studied existence of limiting distributions allowing, in particular, that these distributions depend on the initial state of the process. 

In this work we provide a general and unified approach for the study of multiple invariant measures and, moreover, we show that with dependence on the initial distribution the law of the mild solution of \eqref{SPDE} is governed in the limit $t \to \infty$ by one of the invariant measures. In particular, we show that the methods developed in \cite{Tehranchi2005, Rusinek10, BDK19} can be embedded as a special case of a general framework where one replaces \eqref{eq: dissipativity condition} by a weaker dissipativity condition, which we call \textit{generalized dissipativity condition}:
\begin{enumerate}
    \item[(GDC)] There exists a projection operator $P_1$ on the Hilbert space $H$ and there exist constants $\alpha > 0, \beta \geq 0$ such that, for $x,y \in D(A)$, one has:
    \begin{align*}
     &\ \langle Ax - Ay, x - y \rangle_H + \langle F(x) - F(y), x-y \rangle_H \\ &\hskip30mm \leq - \alpha \| x - y \|_H^2 + \left( \alpha + \beta\right) \| P_1 x - P_1 y \|_H^2.
    \end{align*}
\end{enumerate}
Restricting $x,y \in D(A)$ to the subspace $x,y \in \mathrm{ker}(P_1) \cap D(A)$ shows that (GDC) contains the classical dissipativity condition on the subspace $\mathrm{ker}(P_1) \subset H$. Contrary, restricting $x,y$ to $\mathrm{ran}(P_1) \cap D(A)$ does not yield the classical dissipativity condition but instead contains the additional term $\|P_1x - P_1y\|_H^2$ which describes the influence of the non-dissipative part of the drift. Sufficient conditions and additional remarks, e.g., on this condition are collected in the end of Section 2 while particular examples are discussed in Sections 5 -- 7. Let us mention that (GDC) is satisfied if $F$ is globally Lipschitz continuousmthe semigroup $(S(t))_{t \geq 0}$ is symmetric (i.e. $S(t)^* = S(t)$ for $t \geq 0$), and uniformly convergent to $P_1$ (i.e. $\| S(t) - P_1 \|_{L(H)} \longrightarrow 0$ as $t \to \infty$).

Roughly speaking, we will show that under condition (GDC) and some restrictions on the projected coefficients $P_1F$, $P_1\sigma$, and $P_1 \gamma$, the Markov process obtained from \eqref{SPDE} has for each initial data $X_0 = x$ a limiting distribution $\pi_x$ depending only on $P_1x$. Moreover, the transition probabilities converge exponentially fast in the Wasserstein 2-distance to this limiting distributions. In order to prove this result, we first decompose the state space $H$ according to 
\[
 H = H_0 \oplus H_1, \qquad x = P_0x + P_1x, \ \ P_0 := I - P_1,
\]
where $I$ denotes the identity operator on $H$,
and then investigate the components $P_0X_t$ and $P_1X_t$ separately.
Based on an idea taken from \cite{Gaans2005InvariantMF},
we construct, for each $\tau \geq 0$, a coupling of $X_t$ and $X_{t+\tau}$.
This coupling will be then used to efficiently estimate
the Wasserstein 2-distance for the solution started at two different points.

This work is organized as follows. 
In Section 2 we state the precise conditions imposed on the coefficients of the SPDE \eqref{SPDE}, discuss some properties of the solution and then provide sufficient conditions for the generalized dissipativity condition (GDC). Based on condition (GDC) we derive in Section 3 an estimate on the trajectories of the process when started at two different initial points, i.e. we estimate the $L^2$-norm of $X_t^x - X_t^y$ 
when $x \neq y$. Based on this estimate, we state and prove our main results in Section 4. Examples are then discussed in the subsequent Sections 5 -- 7. In Section 5 we explicitly characterize the limiting distributions of the L\'evy driven Ornstein-Uhlenbeck process with an operator $(A,D(A))$ generating an uniformly convergent semigroup.
The Heath-Jarrow-Morton-Musiela equation is then considered in Section 6 for which we first show that the main results of Section 4 contain \cite{Tehranchi2005, Rusinek10}, and then extend these results by 
characterizing its limiting distributions more explicitly. Finally, we apply our results in Section 7 to an SPDE with delay.

\section{Preliminaries}

\subsection{Framework and notation}
Here and throughout this work, $(\Omega,\mathcal{F},(\mathcal{F}_t)_{t\in\mathbb{R}_+},\mathbb{P})$ is a filtered probability space satisfying the usual conditions. 
Let $U$ be a separable Hilbert space and $W=(W_t)_{t\in\mathbb{R}_+}$ be a $Q$-Wiener process with respect to $(\mathcal{F}_t)_{t\in\mathbb{R}_+}$ on $(\Omega,\mathcal{F},(\mathcal{F}_t)_{t\in\mathbb{R}_+},\mathbb{P})$, where $Q:U\to U$ is a non-negative, symmetric, trace class operator.
Let $E$ be a Polish space, $\mathcal{E}$ the Borel-$\sigma$-field on $E$, and $\mu$ a $\sigma$-finite measure on $(E,\mathcal{E})$. Let $N(dt,d\nu)$ be a $(\mathcal{F}_t)_{t \geq 0}$-Poisson random measure with compensator $dt \mu(d\nu)$ and denote by $\widetilde{N}(dt,d\nu) = N(dt,d\nu) - dt \mu(d\nu)$ the corresponding compensated Poisson random measure.
Suppose that the random objects $(W_t)_{t \geq 0}$ and $N(dt,d\nu)$ are mutually independent.

In this work we investigate the long-time behavior of mild solutions to the stochastic partial differential equation
\begin{equation}\label{SPDE1}
\begin{cases}
dX^x_t=(AX^x_t+F(X^x_t)) dt + \sigma(X^x_t)dW_t + \int_E\gamma(X^x_{t},\nu)\widetilde{N}(dt,d\nu), \qquad t \geq 0,\\
X^x_0=x \in L^2(\Omega,\mathcal{F}_0,\mathbb{P};H),
\end{cases}
\end{equation}
where $(A,D(A))$ is the generator of a strongly continuous semigroup $(S(t))_{t \geq 0}$ on $H$, $H \ni x \mapsto F(x) \in H$ and $H \ni x\mapsto \sigma(x) \in L_2^0$ are Borel measurable mappings, and $(x,\nu)\mapsto\gamma(x,\nu)$ is measurable from $( H\times E,\mathcal{B}(H)\otimes\mathcal{E})$ to $(H,\mathcal{B}(H))$.
Here $\mathcal{B}(H)$ denotes the Borel-$\sigma$-algebra on $H$, and $L_2^0:=L_2^0(H)$ is the Hilbert space of all Hilbert-Schmidt operators from $U_0$ to $H$, where $U_0 := Q^{1/2}U$ is a separable Hilbert space endowed with the scalar product 
\[
\langle x,y \rangle_0:=\langle Q^{-1/2} x, Q^{-1/2} y\rangle_U=\sum_{k\in \mathbb{N}}\tfrac{1}{\lambda_k}\langle x,e_k\rangle_U\langle e_k,y\rangle_U,\qquad \forall x,y \in U_0,
\]
and $Q^{-1/2}$ denotes the pseudoinverse of $Q^{1/2}$. Here $(e_j)_{j\in\mathbb{N}}$ denotes an orthogonal basis of eigenvectors of $Q$ in $U$ with corresponding eigenvalues $(\lambda_j)_{j\in\mathbb{N}}$. For comprehensive introductions to integration concepts in infinite dimensional settings we refer to \cite{da2014stochastic} for the case of $Q$-Wiener processes and to \cite{9783319128535} for compensated Poisson random measures as integrators. Throughout this work we suppose that the coefficients $F,\sigma, \gamma$ are Lipschitz continuous. More precisely:
\begin{enumerate}\label{Assumption A1}
    \item[(A1)] There exist constants $L_F, L_{\sigma}, L_{\gamma} \geq 0$ such that for all $x,y \in H$
    \begin{align}
     \| F(x) - F(y) \|_H^2 &\leq L_{F}\| x-y\|_{H}^2, \label{A1 F Lipschtiz}
  \\ \notag \| \sigma(x) - \sigma(y)\|_{L_2^0(H)}^2 &\leq L_{\sigma}\| x-y\|_H^2,
 \\ \int_{E} \| \gamma(x,\nu) - \gamma(y,\nu)\|_H^2 \mu(d\nu) &\leq L_{\gamma} \| x-y\|_H^2. \notag
\end{align}
Moreover we suppose that
\begin{align}\label{eq: moment condition nu}
\int_E \| \gamma(0,\nu)\|_H^2 \mu(d\nu)<\infty.
\end{align}
\end{enumerate}
Note that condition \eqref{eq: moment condition nu} implies that the jumps satisfy the usual growth conditions, i.e.
\begin{align*}
 \int_E \| \gamma(x, \nu)\|_H^2 \mu(d\nu) 
 &\leq 2\int_E \| \gamma(x,\nu) - \gamma(0,\nu)\|_H^2 \mu(d\nu) + 2 \int_{E} \| \gamma(0,\nu)\|_H^2 \mu(d\nu)
 \\ &\leq 2 \max \left\{ L_{\gamma}, \int_{E} \| \gamma(0,\nu)\|_H^2 \mu(d\nu) \right\} (1 + \| x \|_H^2).
\end{align*}
Moreover, it follows from (GDC) and (A1) that 
\[
 \langle Ax, x \rangle_H \leq \left( \beta + \sqrt{L_F}\right) \| x \|_H^2, \qquad x \in D(A).
\]
Hence $A - (\beta + \sqrt{L_F})$ is dissipative and thus by the Lumer-Phillips theorem the semigroup $(S(t))_{t \geq 0}$ generated by $(A,D(A))$ is quasi-contractive, i.e.
\begin{align}\label{eq: semigroup quasi contractive}
 \| S(t)x \|_H \leq \mathrm{e}^{\left( \beta + \sqrt{L_F}\right)t}\|x\|_H , \quad x \in H.
\end{align}
Then, under conditions (GDC) and (A1), for each initial condition  $x\in L^2(\Omega,\mathcal{F}_0,\mathbb{P};H)$ there exists a unique c\'{a}dl\'{a}g, $(\F_t)_{t \geq 0}$-adapted, mean square continuous, mild solution $(X^x_t)_{t\geq 0}$ to \eqref{SPDE1} such that, for each $T > 0$, there exists a constant $C(T) > 0$ satisfying
\begin{align}\label{eq: moments mild solution}
 \mathbb{E} \left[\sup_{t\in [0,T]} \| X^x_t \|_H^2 \right] \leq C(T)\left(1 + \E\left[\|x\|_H^2 \right] \right)
\end{align}
and, for all $x,y \in L^2(\Omega, \F_0, \P; H)$,
\begin{align}\label{eq: continuous dependence on initial condition}
 \mathbb{E} \left[ \| X_t^x - X_t^y \|_H^2 \right] \leq C(T)\E\left[ \| x-y \|_H^2 \right], \qquad t \in [0,T].
\end{align}
This means that $(X_t^x)_{t \geq 0}$ satisfies $\P$-a.s.
\begin{align}\label{SPDE mild formulation}
    X_t^x &= S(t)x + \int_0^t S(t-s)F(X_s^x)ds + \int_0^t S(t-s)\sigma(X_s^x)dW_s
    \\ \notag &\ \ \ + \int_0^t \int_E S(t-s) \gamma(X_s^x, \nu) \widetilde{N}(ds,d\nu), \qquad t \geq 0,
\end{align}
where all (stochastic) integrals are well-defined,
see, e.g., \cite{ALBEVERIO2009835}, \cite{9783319128535}, and \cite{Filipovi2010jump}.
The obtained solution is a Markov process whose transition probabilities $p_t(x,dy) = \P[ X_t^x \in dy]$ 
are measurable with respect to $x$. Denote by $(p_t)_{t \geq 0}$ its transition semigroup, i.e., for each bounded measurable function $f: H \longrightarrow \R$, $p_tf$ is given by
\[
 p_tf(x) = \E\left[ f(X_t^x) \right] = \int_{H}f(y)p_t(x,dy), \qquad t \geq 0, \ \ x \in H.
\]
Using the continuous dependence on the initial condition, see \eqref{eq: continuous dependence on initial condition}, 
it can be shown that $p_tf \in C_b(H)$ for each $f \in C_b(H)$,
i.e. the transition semigroup is $C_b$-Feller.

In this work we investigate the the existence of invariant measures and convergence of the transition probabilities towards these measures for the Markov process $(X_t^x)_{t \geq 0}$ with particular focus on the cases where uniqueness of invariant measures fails to hold.
By slight abuse of notation, we denote by $p_t^*$ the adjoint operator to $p_t$ defined by
\[
 p_t^* \rho(dx) = \int_H p_t(y,dx) \rho(dy), \qquad t \geq 0.
\]
Recall that a probability measure $\pi$ on $(H, \mathcal{B}(H))$ is called \textit{invariant measure} for the semigroup $(p_t)_{t \geq 0}$ if and only if $p_t^* \pi = \pi$ holds for each $t \geq 0$.
Let $\mathcal{P}_2(H)$ be the space of Borel probability measures $\rho$ on $(H, \mathcal{B}(H))$ with finite second moments. Recall that $\mathcal{P}_2(H)$ is separable and complete when equipped with the \textit{Wasserstein-2-distance}
\begin{align}\label{eq: Wasserstein distance}
 \mathrm{W}_2(\rho, \widetilde{\rho}) = \inf_{ G \in \mathcal{H}(\rho, \widetilde{\rho})} \left( \int_{H^2} \| x - y \|_H^2 G(dx,dy) \right)^{\frac{1}{2}}, \qquad \rho, \widetilde{\rho} \in \mathcal{P}_2(H),
\end{align}
where $\mathcal{H}(\rho,\widetilde{\rho})$ denotes the set of all couplings of $(\rho, \widetilde{\rho})$, i.e. Borel probability measures on $H \times H$ whose marginals are given by $\rho$ and $\widetilde{\rho}$, respectively, see \cite[Section 6]{MR2459454} for a general introduction to couplings and Wasserstein distances.

\subsection{Discussion of generalized dissipativity condition}

In this section we briefly discuss the condition
\begin{align}\label{eq: generalized dissipativity for A}
 \langle Ax, x \rangle_H \leq - \lambda_0 \| x\|_H^2 + (\lambda_0 +\lambda_1) \| P_1 x \|_H^2, \qquad x \in D(A),
\end{align}
where $\lambda_0 > 0$ and $\lambda_1 \geq 0$. Note that, if \eqref{eq: generalized dissipativity for A} and condition \eqref{Assumption A1} are satisfied, then 
\begin{align}\label{eq:03}
 & \langle Ax - Ay, x-y\rangle_H + \langle F(x) - F(y), x-y\rangle_H
 \\ \notag &\leq \langle Ax - Ay, x-y\rangle_H + \sqrt{L_F}\| x-y\|_H^2
 \\ \notag &\leq - \left( \lambda_0 - \sqrt{L}_F\right)\| x - y \|_H^2 + \left( \lambda_0 + \lambda_1 \right) \| P_1 x - P_1y \|_H^2,
\end{align}
i.e. the generalized dissipativity condition (GDC) is satisfied for $\alpha = \lambda_0 - \sqrt{L_F}$ and $\beta = \lambda_1 + \sqrt{L_F}$, provided that $\lambda_0 > \sqrt{L_F}$.
\begin{Proposition}\label{prop: orthogonal decomposition of semigroup}
 Suppose that there exists an orthogonal decomposition $H = H_0 \oplus H_1$
 of $H$ into closed linear subspaces $H_0, H_1 \subset H$ such that $(S(t))_{t \geq 0}$ leaves $H_0$ and $H_1$ invariant and there exist constants $\lambda_0 > 0$ and $\lambda_1 \geq 0$ satisfying
   \[
     \| S(t) x_0\|_H\leq \mathrm{e}^{-\lambda_0 t}\| x_0\|_H,
     \qquad \| S(t)x_1 \|_H \leq \mathrm{e}^{\lambda_1 t} \|x_1\|_H,
     \qquad \forall t\geq 0.
    \]
     for all $x_0 \in H_0$ and $x_1 \in H_1$. Then \eqref{eq: generalized dissipativity for A} holds for $P_1$ being the orthogonal projection operator onto $H_1$.
\end{Proposition}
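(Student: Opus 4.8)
The plan is to use that the orthogonal projection $P_1$ commutes with the semigroup, so that the problem decouples into the two invariant subspaces, on each of which \eqref{eq: generalized dissipativity for A} reduces to the standard relation between the exponential bound of a semigroup and the numerical range of its generator.

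First I would record that $P_1$ (and $P_0 = I - P_1$) commutes with $S(t)$ for every $t \geq 0$: since $H = H_0 \oplus H_1$ is orthogonal and both subspaces are $S(t)$-invariant, writing $x = P_0 x + P_1 x$ gives $S(t)x = S(t)P_0 x + S(t)P_1 x$ with $S(t)P_0 x \in H_0$ and $S(t)P_1 x \in H_1$, whence $P_1 S(t)x = S(t)P_1 x$. Because $P_1$ is bounded, for $x \in D(A)$ one then has $t^{-1}(S(t)P_1 x - P_1 x) = P_1\, t^{-1}(S(t)x - x) \to P_1 Ax$ as $t \to 0^+$; this shows $P_1 x \in D(A)$ with $A P_1 x = P_1 Ax$, and likewise for $P_0$.

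Next, for $x \in D(A)$ I set $x_0 := P_0 x$ and $x_1 := P_1 x$, which lie in $D(A)$ by the previous step and satisfy $Ax_0 = P_0 Ax \in H_0$ and $Ax_1 = P_1 Ax \in H_1$. Orthogonality of $H_0$ and $H_1$ annihilates the cross terms, so $\langle Ax, x\rangle_H = \langle Ax_0, x_0\rangle_H + \langle Ax_1, x_1\rangle_H$. To estimate each term I use that, for any $z \in D(A)$, the map $t \mapsto \|S(t)z\|_H^2$ is differentiable with derivative $2\langle AS(t)z, S(t)z\rangle_H$, so its right derivative at $t = 0$ equals $2\langle Az, z\rangle_H$; comparing with $\|S(t)z\|_H^2 \leq \mathrm{e}^{2\omega t}\|z\|_H^2$ (an equality at $t = 0$) yields $\langle Az, z\rangle_H \leq \omega\|z\|_H^2$. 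Applied with $z = x_0$ and $\omega = -\lambda_0$, respectively $z = x_1$ and $\omega = \lambda_1$, this gives $\langle Ax_0, x_0\rangle_H \leq -\lambda_0\|x_0\|_H^2$ and $\langle Ax_1, x_1\rangle_H \leq \lambda_1\|x_1\|_H^2$. Adding these and substituting $\|x_0\|_H^2 = \|x\|_H^2 - \|P_1 x\|_H^2$ produces $\langle Ax, x\rangle_H \leq -\lambda_0\|x\|_H^2 + (\lambda_0 + \lambda_1)\|P_1 x\|_H^2$, which is \eqref{eq: generalized dissipativity for A}.

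I expect the only point requiring care to be the commutation claim that $P_1$ maps $D(A)$ into itself with $A P_1 = P_1 A$ on $D(A)$; once this is in place, the splitting of $\langle Ax, x\rangle_H$ and the derivative-at-zero estimate are routine.
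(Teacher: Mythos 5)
Your proof is correct and follows essentially the same route as the paper: establish that $P_0,P_1$ commute with $S(t)$ and hence with $A$ on $D(A)$, split $\langle Ax,x\rangle_H$ into the two orthogonal components, and bound each by the dissipativity of the rescaled restricted semigroups. The only cosmetic difference is that you obtain $\langle Az,z\rangle_H\leq\omega\|z\|_H^2$ from the exponential bound by differentiating $t\mapsto\|S(t)z\|_H^2$ at $t=0$, whereas the paper cites the Lumer--Phillips theorem for the same conclusion.
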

\begin{proof}
 Let $P_0$ be the orthogonal projection operator onto $H_0$.
 Since $(S(t))_{t \geq 0}$ leaves the closed subspace $H_0$ invariant, 
 its restriction $(S(t)|_{H_0})_{t\geq 0}$ onto $H_0$ is a strongly continuous semigroup of contractions on $H_0$ with generator $(A_0,D(A_0))$ being the $H_0$ part of $A$, that is
 \[
  A_0x = Ax, \qquad x \in D(A_0) = \{ y \in D(A) \cap H_0 \ | \ Ay \in H_0 \}. 
 \]
 Since $H_0$ is closed and $S(t)$ leaves $H_0$ invariant, it follows that $Ay = \lim_{t \to 0} \frac{S(t)y - y}{t} \in H_0$ for $y \in D(A) \cap H_0$, i.e. $D(A_0) = D(A) \cap H_0$ and $P_0: D(A) \to D(A_0)$. 
 Let $P_1$ the orthogonal projection operator onto $H_1$. 
 Arguing exactly in the same way shows that
 the restriction $(S(t)|_{H_1})_{t\geq 0}$ is a strongly continuous semigroup of contractions on $H_1$ with generator $(A_1,D(A_1))$
 given by $A_1x = Ax$ and $x \in D(A_1) = D(A) \cap H_1$ so that $P_1: D(A) \to D(A_1)$. Since
 $S(t)$ leaves $H_0$ and $H_1$ invariant, we obtain 
 $P_0S(t) = S(t)P_0$, $P_1 S(t) = S(t)P_1$ from which we conclude that
 $AP_1 x = P_1 Ax$ and $AP_0x = P_0Ax$ for $x \in D(A)$.
 
 
 Since $(\mathrm{e}^{\lambda_0 t}S(t)|_{H_0})_{t\geq 0}$ is a strongly continuous semigroup of contractions on $H_0$ with generator $A_0 + \lambda_0 I$, and $(\mathrm{e}^{-\lambda_1 t}S(t)|_{H_1})_{t\geq 0}$ is a strongly continuous semigroup of contractions on $H_1$ with generator $A_1 - \lambda_1 I$, we have by the Lumer-Phillips theorem (see \cite[Theorem 4.3]{MR710486})
 \[
  \langle A_0x_0, x_0\rangle_H \leq - \lambda_0 \| x_0 \|_H^2 \ \text{ and } \
  \langle A_1x_1, x_1 \rangle_H \leq \lambda_1 \| x_1 \|_H^2,
  \qquad x_0 \in H_0, \ \ x_1 \in H_1.
 \]
 Hence we find that
\begin{align*}
 \langle A x, x \rangle_H 
 &= \langle Ax, P_0 x \rangle_H + \langle Ax, P_1x \rangle_H
 \\ &= \langle P_0 Ax, P_0x \rangle_H + \langle P_1Ax, P_1x \rangle_H
 \\ &=\langle A_0P_0 x , P_0 x \rangle_H + \langle A_1P_1 x,P_1 x\rangle_H 
 \\ &\leq -\lambda_0 \|P_0 x\|_H^2 + \lambda_1 \| P_1 x\|_H^2
 \\ &= - \lambda_0 \| x \|_H^2 + (\lambda_0 + \lambda_1)\| P_1 x\|_H^2,
\end{align*}
where the last equality follows from $H_0 \perp H_1$. This proves the assertion.
\end{proof}
At this point it is worthwhile to mention that
Onno van Gaans has investigated in \cite{Gaans2005InvariantMF} ergodicity for a class of L\'evy driven stochastic partial differential equations where the semigroup $(S(t))_{t \geq 0}$ was supposed to be hyperbolic. Proposition \ref{prop: orthogonal decomposition of semigroup} also covers this case, provided that the hyperbolic decomposition is orthogonal. 
The conditions of previous proposition are satisfied whenever $(S(t))_{t \geq 0}$ is a symmetric, uniformly convergent semigroup.
\begin{Remark} \label{example: ergodic semigroup}
 Suppose that $(S(t))_{t \geq 0}$ is a strongly continuous semigroup on $H$ and there exists an orthogonal projection operator $P$ on $H$ and $\lambda_0 > 0$ such that
 \begin{align}\label{eq: ergodic 00}
  \| S(t)x - Px \|_{H} \leq \mathrm{e}^{- \lambda_0 t}\| x - Px\|_H, \qquad t \geq 0, \ \ x \in H.
 \end{align}
 Then the conditions of Proposition \ref{prop: orthogonal decomposition of semigroup} are satisfied for $H_0 = \mathrm{ker}(P)$ and $H_1 = \mathrm{ran}(P)$ with $\lambda_0 > 0$ and $\lambda_1 = 0$. In particular, $(S(t))_{t \geq 0}$ is a semigroup of contractions.
\end{Remark} 
The following example shows that \eqref{eq: generalized dissipativity for A} can also be satisfied for non-symmetric and non-convergent semigroups.
\begin{Example}
 Let $H = \R^2$, $H_0 = \R \times \{0\}$, $H_1 = \{0\} \times \R$,
 and denote by $P_0, P_1$ the projection operators onto $H_0$ and $H_1$, respectively. Let $A$ be given by $A = \begin{pmatrix} -1 & 1 \\ 0 & 1 \end{pmatrix}$. Then 
 \begin{align*}
  \left\langle \begin{pmatrix} x \\ y \end{pmatrix}, A \begin{pmatrix}x \\ y \end{pmatrix} \right \rangle
  &= -x^2 + xy + y^2
  \\ &\leq - \frac{1}{2} (x^2 + y^2) + 2 y^2
  \\ &= - \frac{1}{2} \| (x,y) \|_H^2 + 2 \| P_1(x,y) \|_H^2,
 \end{align*}
 i.e. \eqref{eq: generalized dissipativity for A} holds for $\lambda_0 = \frac{1}{2}$ and $\lambda_1 = \frac{3}{2}$. Since
 $ \mathrm{e}^{tA} = \begin{pmatrix} \mathrm{e}^{-t} & \frac{\mathrm{e}^t - \mathrm{e}^{-t}}{2} \\ 0 & \mathrm{e}^{t} \end{pmatrix}$,
 it is clear that neither the conditions of Proposition \ref{prop: orthogonal decomposition of semigroup} nor of Remark \ref{example: ergodic semigroup}
 are satisfied.
\end{Example}

\section{Key stability estimate}

Define, for $x,y \in D(A)$, the function
\begin{align*}
\mathcal{L}(\|\cdot\|_H^2)(x,y)
 &:= 2 \langle A (x-y) +  F(x)-F(y), x-y\rangle_H+  \|\sigma(x)-\sigma(y)\|_{L_2^0(H)}^2
 \\ &\ \ \ + \int_E \|\gamma(x,\nu)-\gamma(y,\nu)\|_H^2 \mu(d\nu).\notag
\end{align*}
Remark that under the additional assumption that \eqref{SPDE} has a strong solution, the function 
\begin{equation*}
  \mathcal{L}(\|\cdot\|_H^2)(z):=  2 \langle A (z) +  F(z), z\rangle_H+  \|\sigma(z)\|_{L_2^0(H)}^2+ \int_E \|\gamma(z,\nu)\|_H^2 \mu(d\nu).
\end{equation*} 
is simply the generator $\mathcal{L}$ applied to the unbounded function $\|z\|_H^2$, see, e.g,. \cite[equation (3.4)]{Albeverio2017}). 
Since we work with mild solutions instead, all computations given below 
require to use additionally Yosida approximations for the mild solution of \eqref{SPDE}.

Below we first prove a Lyapunov-type estimate for $\mathcal{L}(\| \cdot \|_H^2)$ and then deduce from that by an application of the It\^{o}-formula for mild solutions to \eqref{SPDE1} an estimate for the $L^2$-norm of $X_t^x - X_t^y$.
\begin{Lemma}\label{L: Lyapunovfunction}
Assume that condition (GDC) and (A1) are satisfied. Then
\begin{equation}\label{LyapunovFunct}
\mathcal{L}(\| \cdot\|_H^2)(x,y)\leq - \left( 2 \alpha - L_{\sigma} - L_{\gamma} \right)  \| x-y\|_H^2+2(\alpha+\beta) \|P_1x- P_1y\|_H^2
\end{equation}
 holds for $x,y \in D(A)$.
\end{Lemma}
\begin{proof}
Using first (A1) and then (GDC) we find that
\begin{align*}
    \mathcal{L}(\| \cdot \|_H^2)(x,y) &\leq (L_{\sigma} + L_{\gamma}) \| x -y \|_H^2 
    \\ &\ \ \ + 2\langle Ax-Ay,x-y \rangle_H + 2\langle F(x) - F(y), x-y \rangle_H
    \\ &\leq - \left( 2\alpha - L_{\sigma} - L_{\gamma} \right)\| x-y\|_H^2 + 2\left(\alpha + \beta\right) \| P_1x - P_1y \|_H^2.
\end{align*}
This proves the asserted inequality.
\end{proof}
The following is our key stability estimate.
\begin{Proposition}\label{T: Lipschitz continuity}
Suppose that (GDC) and (A1) are satisfied, that 
\begin{align}\label{definition epsilon}
 \e := 2\alpha - L_{\sigma} - L_{\gamma} > 0,
\end{align} 
and suppose that
\begin{equation}\label{abstractgrowthsforjumpcoeff}
 \sup_{x\in H}\int_E \|\gamma(x,\nu)\|^4 \mu(d\nu)<\infty.
\end{equation}

Then, for each $x,y\in L^2(\Omega,\mathcal{F}_0,\mathbb{P};H)$ and all $t \geq 0$,
\begin{align}\label{Lipschitzcontinuity}
  & \mathbb{E}\left[\|X_t^x- X_t^y\|_H^2 \right]\notag\\
\leq & \mathrm{e}^{- \e t}\mathbb{E} \left[\| x-y\|_{H}^2 \right]  + 2(\alpha+\beta)  \int_0^t \mathrm{e}^{ - \e (t-s)}\mathbb{E}\left[\| P_1 X_s^x-P_1X_s^y\|_H^2\right]ds.
\end{align}
\end{Proposition}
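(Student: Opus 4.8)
The plan is to apply the It\^{o} formula for mild solutions to the real-valued process $t \mapsto \|X_t^x - X_t^y\|_H^2$ and then exploit the Lyapunov estimate of Lemma \ref{L: Lyapunovfunction}. Because we work with mild rather than strong solutions, the function $\|z\|_H^2$ cannot be differentiated along the trajectory directly; as signalled in the remark preceding Lemma \ref{L: Lyapunovfunction}, the standard device is to pass through \emph{Yosida approximations} $A_n := n A (n - A)^{-1}$ (equivalently $nAR(n,A)$). For each $n$ one considers the strong solutions $(X_t^{x,n})_{t\geq 0}$ of the regularized equation in which $A$ is replaced by the bounded operator $A_n$ and the initial data/coefficients are kept fixed. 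For these strong solutions the classical It\^{o} formula applies to $\|X_t^{x,n} - X_t^{y,n}\|_H^2$, producing a drift term governed by $\mathcal{L}_n(\|\cdot\|_H^2)$, a stochastic integral against $dW_s$, and a compensated Poisson integral. The integrability condition \eqref{abstractgrowthsforjumpcoeff} (the uniform fourth-moment bound on $\gamma$) is exactly what is needed to guarantee that the martingale parts have zero expectation, so that upon taking $\mathbb{E}[\,\cdot\,]$ only the drift survives.

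The main computation, carried out first at the level of the Yosida approximation and then passed to the limit, runs as follows. After taking expectations one obtains
\begin{align*}
 \mathbb{E}\left[\|X_t^{x,n} - X_t^{y,n}\|_H^2\right]
 = \mathbb{E}\left[\|x - y\|_H^2\right]
 + \int_0^t \mathbb{E}\left[\mathcal{L}_n(\|\cdot\|_H^2)(X_s^{x,n}, X_s^{y,n})\right] ds.
\end{align*}
Letting $n \to \infty$, one uses the standard convergence properties of Yosida approximations together with the moment bound \eqref{eq: moments mild solution}, continuous dependence \eqref{eq: continuous dependence on initial condition}, and the Lipschitz assumptions (A1) to pass to the limit inside the expectations; this replaces $A_n$ by $A$ and yields the integral identity for the mild solution with $\mathcal{L}(\|\cdot\|_H^2)$ in the integrand. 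At this stage Lemma \ref{L: Lyapunovfunction} is invoked to bound the integrand, giving
\begin{align*}
 \mathbb{E}\left[\|X_t^x - X_t^y\|_H^2\right]
 \leq \mathbb{E}\left[\|x-y\|_H^2\right]
 - \e \int_0^t \mathbb{E}\left[\|X_s^x - X_s^y\|_H^2\right] ds
 + 2(\alpha + \beta) \int_0^t \mathbb{E}\left[\|P_1 X_s^x - P_1 X_s^y\|_H^2\right] ds,
\end{align*}
where $\e = 2\alpha - L_\sigma - L_\gamma > 0$ by \eqref{definition epsilon}.

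To obtain the asserted exponential form \eqref{Lipschitzcontinuity}, I would set $u(t) := \mathbb{E}[\|X_t^x - X_t^y\|_H^2]$ and $g(t) := 2(\alpha+\beta)\,\mathbb{E}[\|P_1 X_t^x - P_1 X_t^y\|_H^2]$, so that the above reads $u(t) \leq u(0) - \e \int_0^t u(s)\,ds + \int_0^t g(s)\,ds$. Applying Gronwall's inequality in its integral form (or, equivalently, multiplying the differential inequality $\tfrac{d}{dt}\big(\mathrm{e}^{\e t} u(t)\big) \leq \mathrm{e}^{\e t} g(t)$ and integrating) produces precisely
\begin{align*}
 u(t) \leq \mathrm{e}^{-\e t} u(0) + \int_0^t \mathrm{e}^{-\e(t-s)} g(s)\,ds,
\end{align*}
which is \eqref{Lipschitzcontinuity}. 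One subtlety is that $u$ need not be a priori differentiable, so I would phrase the Gronwall step purely in integral form to stay rigorous.

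The hard part will be the Yosida approximation and the associated limit passage: justifying that the It\^{o} formula applies to the approximating strong solutions, that the martingale terms genuinely vanish in expectation (which is where \eqref{abstractgrowthsforjumpcoeff} enters, to control the quadratic variation of the jump integral), and that one may interchange limit, integral, and expectation when sending $n \to \infty$. The Gronwall argument itself and the invocation of Lemma \ref{L: Lyapunovfunction} are routine once the integral identity is in place; the analytic care lies entirely in making the formal generator computation legitimate for mild solutions.
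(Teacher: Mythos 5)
Your overall strategy coincides with the paper's: regularize via Yosida approximations so that the (generalized) It\^{o} formula of Theorem \ref{GenIto} applies, take expectations so only the drift survives, invoke Lemma \ref{L: Lyapunovfunction}, and pass to the limit $n\to\infty$. (The paper mollifies the coefficients and the initial data with $R_n=n(n-A)^{-1}$ while keeping $A$, rather than replacing $A$ by the bounded $A_n$; if you use $A_n$ you must additionally check that (GDC) survives the approximation, via $\langle A_n z,z\rangle_H\leq\langle AR_nz,R_nz\rangle_H$, which puts $R_n$ inside the $\|P_1(\cdot)\|_H^2$ term and adds one more limit to control. This is a manageable wrinkle, not a flaw.)

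There is, however, one concrete error in the step you single out as the rigorous one. The implication
\begin{align*}
 u(t)\leq u(0)-\e\int_0^t u(s)\,ds+\int_0^t g(s)\,ds
 \quad\Longrightarrow\quad
 u(t)\leq \mathrm{e}^{-\e t}u(0)+\int_0^t \mathrm{e}^{-\e(t-s)}g(s)\,ds
\end{align*}
is \emph{false} for a mere integral inequality with a negative coefficient $-\e<0$: Gronwall's lemma in integral form requires a nonnegative kernel, and the inequality $u\leq w$ with $w(t):=u(0)-\e\int_0^t u+\int_0^t g$ only yields $w'=-\e u+g\geq -\e w+g$, i.e.\ a lower bound on $w$. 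A continuous $u$ that drops to $0$ quickly and then climbs back up near some large $t_0$ satisfies the hypothesis but violates the conclusion. So phrasing the Gronwall step ``purely in integral form'' is precisely the version that does not work. The fix is available from what you already have: the It\^{o} computation gives the \emph{identity} $u(t)=u(0)+\int_0^t\E[\mathcal{L}(\|\cdot\|_H^2)(X_s,Y_s)]\,ds$, so $u$ is absolutely continuous with $u'(s)=\E[\mathcal{L}(\|\cdot\|_H^2)(X_s,Y_s)]\leq-\e u(s)+g(s)$ a.e., and the differential form $\frac{d}{ds}\bigl(\mathrm{e}^{\e s}u(s)\bigr)\leq \mathrm{e}^{\e s}g(s)$ integrates to the claim. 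The paper sidesteps the issue entirely by applying the It\^{o} formula to $F(t,z)=\mathrm{e}^{\e t}\|z\|_H^2$, so the exponential weight is built in and no Gronwall argument is needed; you should either adopt that device or argue through the a.e.\ differential inequality as above. With that repair, and with the limit passage $n\to\infty$ justified by dominated convergence as in the paper (the bound $|\mathcal{L}-\mathcal{L}_n|\leq C\|X_s^n-Y_s^n\|_H^2$ is what makes the generalized Lebesgue theorem applicable), your proof is sound.
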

\begin{proof}
To simplify the notation, denote by $(X_t)_{t\geq 0}$ the mild solution to (\ref{SPDE}) with initial condition $x$ and by $(Y_t)_{t\geq 0}$ the mild solution to (\ref{SPDE}) with initial condition $y$. Moreover, we write $(X_t^n)_{t\geq 0}$ and $(Y_t^n)_{t\geq 0}$ for the strong solutions to the corresponding Yosida -approximation systems
\begin{align*}
 \begin{cases}
dX^n_t=AX^n_t+R_n F(X^n_t) dt +R_n \sigma(X^n_t)dW_t + \int_ER_n\gamma(X^n_{t},\nu)\widetilde{N}(dt,d\nu),\\
X^n_0=R_nx,\quad t \geq 0
\end{cases}
\end{align*}
and 
\begin{align*}
 \begin{cases}
dY^n_t=AY^n_t+R_n F(Y^n_t) dt +R_n \sigma(Y^n_t)dW_t + \int_ER_n\gamma(Y^n_{t},\nu)\widetilde{N}(dt,d\nu),\\
Y^n_0=R_ny,\quad t \geq 0
\end{cases}
\end{align*}
where $R_n=n(n-A)^{-1}$ for $n\in\mathbb{N}$ with $n > \alpha + \beta + \sqrt{L_F} =: \lambda$. 
By \eqref{eq: semigroup quasi contractive} we find for each $n \geq 1 + \lambda$ the inequality
\[
 \| R_nz \|_H \leq \frac{n}{n - \lambda}\|z\|_H 
 \leq (1 + \lambda)\|z \|_H.
\]
By classical properties of the resolvent (see \cite[Lemma 3.2]{MR710486}),
one clearly has $R_nz \to z$ as $n \to \infty$ in $H$ . Moreover, by properties  of the Yosida approximation of mild solutions of SPDEs (compare e.g. with Appendix A2 in  \cite{9783319128535} or Section 2 in \cite{Albeverio2017}) we have
\[
 \lim_{n \to \infty} \E\left[ \sup_{t \in [0,T]} \| X_t^n - X_t \|_H^2 + \sup_{t \in [0,T]} \| Y_t^n - Y_t \|_H^2 \right] = 0, \qquad \forall T > 0
\]
and hence there exists a subsequence (which is again denoted by $n$) such that $X_t^n \longrightarrow X_t$ and $Y_t^n \longrightarrow Y_t$ hold a.s. for each $t \geq 0$. Following a  method  proposed in  \cite{Albeverio2017} we verify that sufficient conditions are satisfied to apply the  generalized It\^{o}-formula 
from Theorem \ref{GenIto} to the 
function $F(t,z):= \mathrm{e}^{\e t}\| z \|_H^2$, where $\e = 2 \alpha - L_{\sigma} - L_{\gamma}$ is given by \eqref{definition epsilon}:
\begin{align*}
X_t^n-Y_t^n &= R_n (x-y)+\int_0^{t}\left\{ A(X_s^n-Y_s^n)+  R_n (F(X_s^n)-F(Y_s^n))\right\}ds
\\ &+ \int_0^{t}R_n( \sigma(X_s^n)-\sigma(Y_s^n))dW_s
+ \int_0^t\int_E R_n( \gamma(X_{s}^n,\nu)-\gamma(Y_{s}^n,\nu))\widetilde{N}(ds,d\nu).
\end{align*} 
Observe that, by condition (A1) and \eqref{abstractgrowthsforjumpcoeff}, one has
\begin{align*}
&\ \int_0^t\int_E  \| R_n( \gamma(X_s^n,\nu)-\gamma(Y_s^n,\nu))\|_H^2  \mu(d\nu)ds
\\ &\qquad \qquad +  \int_0^t\int_E \| R_n( \gamma(X_s^n,\nu)-\gamma(Y_s^n,\nu))\|_H^4  \mu(d\nu)ds
\\ &\leq (1+ \lambda)^2 \int_0^t \int_E \| \gamma(X_s^n,\nu) - \gamma(Y_s^n,\nu) \|_H^2 \mu(d\nu)ds 
\\ &\ \ \ + 8 (1 + \lambda)^4 \int_0^t \int_E \left( \| \gamma(X_s^n,\nu)\|_H^4 + \| \gamma(Y_s^n, \nu) \|_H^4 \right) \mu(d\nu) ds
\\ &\leq L_{\gamma} (1 + \lambda)^2 \int_0^t \| X_s^n - Y_s^n \|_H^2 ds 
\\ &\qquad \qquad + 16 (1+\lambda)^4 t \sup_{z \in H} \int_E \| \gamma(z,\nu)\|_H^4 \mu(d\nu) < \infty.
\end{align*}
Thus we can apply the generalized It\^{o}-formula 
from Theorem \ref{GenIto} 
and obtain (similar to   (3.5) in \cite{Albeverio2017})
\begin{align}
& \mathrm{e}^{\e t} \|X_t^n-Y_t^n\|_H^2-\|R_n(x-y)\|_H^2\notag 
\\ &= \int_0^t \langle 2 \mathrm{e}^{\e s}  (X_s^n-Y_t^n), R_n(\sigma(X_s^n)-\sigma(Y_s^n)) dW_s \rangle_H\notag
\\ & +  \int_0^t \mathrm{e}^{\e s} \left[\e  \| X_s^n-Y_s^n\|_H^2 +\mathcal{L}_n(\|\cdot\|_H^2)(X_s^n,Y_s^n) \right]ds \notag
\\ &+ \int_0^{t}\int_E \mathrm{e}^{\e s} \left[ \| X_{s}^n-Y_{s}^n+R_n(\gamma(X_{s}^n,\nu)-\gamma(Y_{s}^n,\nu))\|_H^2- \| X_{s}^n-Y_{s}^n\|_H^2 \right]\widetilde{N}(ds, d\nu),\label{IToYOSIDA}
\end{align}
where we used, for $z,w \in D(A)$, the notation 
\begin{align*}
 \mathcal{L}_n(\|\cdot\|_H^2)(z,w)
 &:= 2 \langle z-w , A (z-w) + R_n( F(z)-F(w))\rangle_H + \|R_n (\sigma(z)-\sigma(w))\|_{L_2^0(H)}^2
 \\ &\ \ \ + \int_E \|R_n(\gamma(z,\nu)-\gamma(w,\nu))\|_H^2 \mu(d\nu).\notag
\end{align*}
Taking expectations in \eqref{IToYOSIDA} yields
\begin{align}
& \mathrm{e}^{\e t} \mathbb{E}\left[\|X_t^n-Y_t^n\|_{H}^2 \right]- \mathbb{E} \left[\|R_n(x-y)\|_{H}^2 \right]\notag 
\\ &= \mathbb{E}\left[  \int_0^t \mathrm{e}^{\e s} \left(\e \| X_s^n-Y_s^n\|_{H}^2 +\mathcal{L}_n(\|\cdot\|_{H}^2)(X_s^n,Y_s^n) \right)ds \right].
\end{align}
Lemma \ref{L: Lyapunovfunction} yields
\begin{align*}
& \mathrm{e}^{\e t} \mathbb{E} \left[\|X_t^n-Y_t^n\|_{H}^2 \right]-\mathbb{E} \left[\|R_n(x-y)\|_{H}^2\right]- 2(\alpha+\beta) \int_0^t \mathrm{e}^{\e s} \mathbb{E}\left[\|P_1X_s^n-P_1Y_s^n\|_H^2\right]ds\notag 
\\ &\leq \mathbb{E}\left[  \int_0^t \mathrm{e}^{\e s} (-\mathcal{L}(\|\cdot\|_{H}^2)(X_s^n,Y_s^n)+\mathcal{L}_n(\|\cdot\|_{H}^2)(X_s^n,Y_s^n))ds \right]. 
\end{align*}
Below we prove that the right-hand-side tends to zero as $n \to \infty$, which would imply the assertion of this theorem. To prove the desired convergence to zero we apply 
the generalized Lebesgue Theorem (see \cite[Theorem 7.1.8]{9783319128535}).
For this reason we have to prove that  
\begin{equation}
\label{L: LConvergestoLn2}
 \mathcal{L}(\|\cdot\|_{H}^2)(X_s^n,Y_s^n)- \mathcal{L}_n(\|\cdot\|_{H}^2)(X_s^n,Y_s^n)\to 0
\end{equation}
holds a.s. for each $s>0$ as $n\to \infty$ 
and, moreover, there exists a constant $C > 0$ such that
\begin{align}\label{eq: generalized lebesgue upper bound}
 |\mathcal{L}(\|\cdot\|_{H}^2)(X_s^n,Y_s^n)- \mathcal{L}_n(\|\cdot\|_{H}^2)(X_s^n,Y_s^n)|
 \leq C \| X_s^n - Y_s^n \|_H^2.
\end{align}
We start with the proof of \eqref{L: LConvergestoLn2}.
Denote $F_s^n:=F(X_s^n)-F(Y_s^n)$, $\sigma_s^n:= \sigma(X_s^n)- \sigma(Y_s^n)$ and  $\gamma_s^n(\nu):=\gamma(X_s^n,\nu)-\gamma(Y_s^n,\nu)$ and analogously $F_s:=F(X_s)-F(Y_s)$, $\sigma_s:= \sigma(X_s)- \sigma(Y_s)$ and $\gamma_s(\nu):=\gamma(X_s,\nu)-\gamma(Y_s,\nu)$ for each $n\in\mathbb{N}$, $s\geq 0$ and $\nu\in E$. Then
\begin{align*}
& \vert (\mathcal{L}(\|\cdot\|_H^2)(X_s^n,Y_s^n)- \mathcal{L}_n(\|\cdot\|_H^2)(X_s^n,Y_s^n))\vert
\\ &\leq 2 \vert \langle X_s^n-Y_s^n,  F_s^n-R_nF_s^n\rangle_H \vert
+  \vert\| \sigma_s^n\|_{L_2^0}^2-\| R_n\sigma_s^n\|_{L_2^0}^2\vert
\\ &+ \left| \int_E \| \gamma_s^n(\nu)\|_H^2-\| R_n\gamma_s^n(\nu)\|_H^2 \mu(d\nu) \right| 
\\ &=: I_1 + I_2 + I_3.
\end{align*}
For the first term $I_1$ we estimate
\begin{align*}
 I_1 &\leq 2\|X_s^n-Y_s^n\|_H\|F_s^n-R_n F_s^n\|_H
\\ &\leq 2 \|X_s^n-Y_s^n\|_H \left(\|F_s^n-F_s\|_H + \|F_s-R_nF_s\|_H
+\|R_n F_s-R_n F_s^n\|_H \right)
\\ &\leq 2\|X_s^n-Y_s^n\|_{H} \left(\|F_s^n-F_s\|_H + \|F_s-R_nF_s\|_H + (1+\lambda)\| F_s- F_s^n\|_H \right).
\end{align*}
Using that $X_s^n \to X_s$ and $Y_s^n \to Y_s$ as a.s. for some subsequence (also denoted by $n$),
we easily find that the right-hand side tends to zero.
The convergence of the second term follows from
\begin{align*}
 I_2 &= \left\vert\| \sigma_s^n\|_{L_2^0}-\| R_n\sigma_s^n\|_{L_2^0}\right \vert \left(\| \sigma_s^n\|_{L_2^0}+\| R_n\sigma_s^n\|_{L_2^0}\right)
\\ &\leq (2 + \lambda)\sqrt{L_{\sigma}} \| \sigma_s^n- R_n\sigma_s^n\|_{L_2^0} \| X_s^n-Y_s^n\|_H 
\\ &\leq (2 + \lambda)^2\sqrt{L_{\sigma}}\| X_s^n-Y_s^n\|_H  
\left(\| \sigma_s^n-\sigma_s\|_{L_2^0}+\|\sigma_s-R_n\sigma_s\|_{L_2^0} + \|\sigma_s- \sigma_s^n\|_{L_2^0} \right).
\end{align*}
It remains to show the convergence of the third term. First, observe 
\begin{align*}
 I_3 &\leq (2 + \lambda)\int_E \| \gamma_s^n(\nu)-  R_n\gamma_s^n(\nu)\|_H \|  \gamma_s^n(\nu)\|_H \mu(d\nu)
\\ &\leq (2 + \lambda)\int_E \bigg(\| \gamma_s^n(\nu)-\gamma_s(\nu)\|_H  +\|\gamma_s(\nu)-R_n\gamma_s(\nu)\|_H
   \\ &\qquad \qquad \qquad +\|R_n\gamma_s(\nu)-  R_n\gamma_s^n(\nu)\|_H \bigg)  \|  \gamma_s^n(\nu)\|_{H} \mu(d\nu)
 \\ &\leq (2 + \lambda) \left( \int_E \| \gamma_s^n(\nu)\|_H^2 \mu(d\nu)\right)^{\frac{1}{2}} \bigg[ \left(\int_E \| \gamma_s^n(\nu)-\gamma_s(\nu)\|_H^2 \mu(d\nu) \right)^{\frac{1}{2}} 
 \\ &\qquad + \left(\int_E \|\gamma_s(\nu)-R_n\gamma_s(\nu)\|_H^2  \mu(d\nu) \right)^{\frac{1}{2}}
 + \left(\int_E \|R_n\gamma_s(\nu)-  R_n\gamma_s^n(\nu)\|_H^2  \mu(d\nu)\right)^{\frac{1}{2}} \bigg]
 \\ &\leq \sqrt{2}(2 + \lambda)^2 L_{\gamma} \| X_s^n - Y_s^n\|_H \left(\|X_s^n-X_s\|_{H}+\|Y_s^n-Y_s\|_H \right)
 \\ &\ \ \ + (2 + \lambda) \sqrt{L_{\gamma}} \| X_s^n - Y_s^n \|_H 
 \left(\int_E \|\gamma_s(\nu)-R_n\gamma_s(\nu)\|_{H}^2 \mu(d\nu)  \right)^{\frac{1}{2}}
 \\ &= I^1_3 + I_3^2
\end{align*}
where the last inequality follows from condition (A1) combined with the inequality \begin{align*}
 &\ \| R_n \gamma_s(\nu) - R_n\gamma^n_s(\nu)\|_H^2 
 \\ &\leq (1 + \lambda)^2 \| \gamma_s(\nu) - \gamma_s^n(\nu) \|_H^2
 \\ &\leq 2(1 + \lambda)^2 \left( \| \gamma(X_s,\nu) - \gamma(Y_s,\nu)\|_H^2 + \| \gamma(X_s^n,\nu) - \gamma(Y_s^n, \nu)\|_H^2 \right).
\end{align*}
The first expression $I_1^1$ clearly tends to zero as $n \to \infty$. For the second expression $I_3^2$ we use the inequality
$\|\gamma_s(\nu)-R_n\gamma_s(\nu)\|_{H}^2
\leq 2(2 + \lambda)^2 \| \gamma_s(\nu)\|_H^2$
so that dominated convergence theorem is applicable, which shows that $I_3^2 \to 0$ as $n \to \infty$ a.s.. This proves \eqref{L: LConvergestoLn2}. Concerning \eqref{eq: generalized lebesgue upper bound}, we find that
\begin{align*}
&\ \vert (\mathcal{L}(\|\cdot\|_H^2)(X_s^n,Y_s^n)- \mathcal{L}_n(\|\cdot\|_H^2)(X_s^n,Y_s^n))\vert
\\ &\leq  2 \vert \langle X_s^n-Y_s^n,  F_s^n-R_nF_s^n\rangle_H \vert
+ \vert\| \sigma_s^n\|_{L_2^0(H)}^2-\| R_n\sigma_s^n\|_{L_2^0(H)}^2\vert
\\ &\ \ \ + \left\vert \int_E \| \gamma_s^n(\nu)\|_H^2-\| R_n\gamma_s^n(\nu)\|_H^2 \mu(d\nu) \right\vert 
\\ &\leq 2 ( 2 + \lambda) \| X_s^n-Y_s^n\|_H \|F_s^n\|_H + \left(1 + (1 + \lambda)^2\right) \left[ \| \sigma_s^n\|_{L_2^0(H)}^2
+ \int_E \| \gamma_s^n(\nu)\|_H^2 \mu(d\nu) \right]
\\ &\leq 2 ( 2 + \lambda) L_F \| X_s^n-Y_s^n\|_H^2 + \left( 1 + (1 + \lambda)^2 \right)(L_{\sigma} + L_{\gamma}) \|X_s^n-Y_s^n\|_H^2.
\end{align*}
Hence the generalized Lebesgue Theorem 
is applicable, and thus the assertion of this theorem is proved.
\end{proof}
Note that condition \eqref{abstractgrowthsforjumpcoeff} is used to guarantee that the It\^{o}-formula \ref{GenIto} for Hilbert space valued jump diffusions can be applied for $(x,t)\to \mathrm{e}^{t\varepsilon}\| x \|_H^2$. The assertion of Proposition \ref{T: Lipschitz continuity} is also true when $\varepsilon \leq 0$, but will be only applied for the case when $\varepsilon > 0$.

\section{Convergence to limiting distribution}

\subsection{The Case of Vanishing Coefficients}

It follows from Proposition \ref{T: Lipschitz continuity}
that, for $\e > 0$, one has an estimate on the $L^2$-norm of the difference $X_t^x - X_t^y$. Such an estimate alone does neither imply the existence nor uniqueness of an invariant distribution. 
However, if the coefficients $F, \sigma, \gamma$ 
vanish at $H_1$, then we may characterize the limiting distributions in $L^2$.
\begin{Theorem}\label{thm: HJMcase abstract}
Suppose that (GDC) holds with a projection operator $P_1$, (A1), \eqref{abstractgrowthsforjumpcoeff}, \eqref{definition epsilon} are satisfied, that $(S(t))_{t \geq 0}$ leaves $H_0 := \mathrm{ran}(I - P_1)$ invariant, and that $\mathrm{ran}(P_1) \subset \mathrm{ker}(A)$.
Moreover, assume that 
\begin{align}\label{A: Vanishing Coefficients}
    P_1F\equiv 0,\ P_1\sigma \equiv 0,\ P_1\gamma \equiv 0.
\end{align}
Fix $x \in L^2(\Omega, \mathcal{F}_0, \P;H)$ and suppose that
\begin{equation}\label{Coefficients vanish in H1}
 F(P_1x) = 0, \ \sigma(P_1x) = 0, \ \gamma(P_1x,\cdot) = 0
\end{equation}
holds for this fixed choice of $x$. Then 
\[
 \E\left[ \| X_t^x - P_1x \|_H^2 \right] \leq \mathrm{e}^{-\varepsilon t}\mathbb{E}\left[\|(1-P_1) x\|^2_H \right].
\] 
In particular, let $\rho$ be the law of $x \in L^2(\Omega, \mathcal{F}_0, \P; H)$ and $\rho_1$ be the law of $P_1x$, respectively. Then $\rho_1$ is an invariant measure.
\end{Theorem}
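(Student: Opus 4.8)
The plan is to apply the key stability estimate, Proposition \ref{T: Lipschitz continuity}, to the two initial conditions $x$ and $P_1 x$, after first establishing two facts: that the solution started at $P_1 x$ is the \emph{constant} process $X_t^{P_1 x} = P_1 x$, and that the $P_1$-component of $X_t^x$ is frozen at $P_1 x$. Together these make the integral term on the right-hand side of \eqref{Lipschitzcontinuity} vanish, leaving exactly the claimed exponential decay.

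First I would record the semigroup identity $P_1 S(t) = P_1$ for all $t \geq 0$. Decomposing $y = P_0 y + P_1 y$ with $P_0 = I - P_1$: since $(S(t))_{t\geq 0}$ leaves $H_0 = \mathrm{ran}(P_0) = \mathrm{ker}(P_1)$ invariant, one has $S(t) P_0 y \in \mathrm{ker}(P_1)$ and hence $P_1 S(t) P_0 y = 0$; and since $P_1 y \in \mathrm{ran}(P_1) \subseteq \mathrm{ker}(A)$, the vector $P_1 y$ is fixed by the semigroup (because $\frac{d}{dt} S(t) P_1 y = S(t) A P_1 y = 0$), so $P_1 S(t) P_1 y = P_1 y$. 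Adding the two gives $P_1 S(t) y = P_1 y$. Next, applying the bounded operator $P_1$ to the mild formulation \eqref{SPDE mild formulation} and commuting it through the Bochner, stochastic and Poisson integrals, the identity $P_1 S(t-s) = P_1$ together with the vanishing assumptions \eqref{A: Vanishing Coefficients} annihilates every integrand, leaving $P_1 X_t^x = P_1 S(t) x = P_1 x$. The same computation with initial datum $P_1 x$ gives $P_1 X_t^{P_1 x} = P_1 (P_1 x) = P_1 x$; moreover, since $P_1 x \in \mathrm{ker}(A)$ forces $S(t)(P_1 x) = P_1 x$ while \eqref{Coefficients vanish in H1} makes all three integrands vanish, the constant process $t \mapsto P_1 x$ solves the mild equation started at $P_1 x$, so uniqueness yields $X_t^{P_1 x} = P_1 x$ almost surely.

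Now I would invoke Proposition \ref{T: Lipschitz continuity} with the initial conditions $x$ and $P_1 x$. Because $P_1 X_s^x = P_1 x = P_1 X_s^{P_1 x}$ for every $s$, the integrand $\E[\|P_1 X_s^x - P_1 X_s^{P_1 x}\|_H^2]$ is identically zero, so the whole integral term drops out; using $X_t^{P_1 x} = P_1 x$ and $x - P_1 x = (I - P_1)x$ then gives $\E[\|X_t^x - P_1 x\|_H^2] \leq \mathrm{e}^{-\varepsilon t}\E[\|(I - P_1)x\|_H^2]$, which is the asserted estimate. For the final claim, the invariance of $\rho_1$ follows directly from $X_t^{P_1 x} = P_1 x$: the law of $X_t^{P_1 x}$ equals $p_t^* \rho_1$ by the definition of the transition semigroup applied to the $\mathcal{F}_0$-measurable initial datum $P_1 x$, and on the other hand it equals the law of $P_1 x$, namely $\rho_1$; hence $p_t^* \rho_1 = \rho_1$ for all $t \geq 0$.

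I expect the main obstacle to be the bookkeeping behind $P_1 X_t^x = P_1 x$: one must justify interchanging the bounded projection $P_1$ with the stochastic and Poisson integrals and verify that $P_1 S(t-s)$ acts on the operator-valued integrands $\sigma$ and $\gamma$ as pointwise composition, so that $P_1 \sigma \equiv 0$ and $P_1 \gamma \equiv 0$ genuinely annihilate them. The semigroup identity $P_1 S(t) = P_1$ is the linchpin that makes this work, and it is precisely there that both structural hypotheses, invariance of $H_0$ and $\mathrm{ran}(P_1) \subseteq \mathrm{ker}(A)$, enter in an essential way.
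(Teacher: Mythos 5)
Your proof is correct, and it rests on the same two pillars as the paper's: the identity $P_1 X_t^x = P_1 x$ (obtained from $P_1 S(t)=P_1$, which in turn uses the invariance of $H_0=\ker(P_1)$ and $\mathrm{ran}(P_1)\subseteq\ker(A)$ exactly as you say) and the key stability estimate of Proposition \ref{T: Lipschitz continuity}. Where you diverge is in the choice of comparison process. The paper stays inside $H_0$: it shows that $P_0X_t^x$ is itself a mild solution of an auxiliary SPDE on $H_0$ with the translated coefficients $\widetilde F(y)=P_0F(P_1x+y)$, $\widetilde\sigma$, $\widetilde\gamma$, and then applies Proposition \ref{T: Lipschitz continuity} to that auxiliary equation, comparing with its zero solution (which is stationary precisely because of \eqref{Coefficients vanish in H1}). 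You instead observe that \eqref{Coefficients vanish in H1} together with $S(t)P_1x=P_1x$ makes the constant process $t\mapsto P_1x$ the unique mild solution started at $P_1x$, and you apply Proposition \ref{T: Lipschitz continuity} directly in $H$ to the pair $(x,P_1x)$; the integral term then dies because both $P_1$-components are frozen at $P_1x$. The two arguments are equivalent in substance, but yours is slightly more economical: it avoids having to verify that the projected process solves the auxiliary equation and that the translated coefficients inherit the hypotheses of Proposition \ref{T: Lipschitz continuity}, and it hands you the invariance of $\rho_1$ for free from $X_t^{P_1x}\equiv P_1x$. One small point in your favour on rigour: you derive $P_1S(t)P_0=0$ directly from the invariance of $H_0$, whereas the paper asserts the stronger commutation $S(t)P_0=P_0S(t)$, which invariance of $H_0$ alone does not quite give (only the weaker fact is actually needed).
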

\begin{proof}
Fix $x \in L^2(\Omega, \mathcal{F}_0, \P;H)$ with property \eqref{Coefficients vanish in H1}
and set $P_0 = I - P_1$. Since $(S(t))_{t \geq 0}$ leaves $H_0$ invariant
we find that $S(t)P_0 = P_0S(t)$ and hence we obtain
$P_1 S(t)P_0 = P_1 P_0S(t) = 0$. Moreover, using \eqref{A: Vanishing Coefficients} we find that
\begin{align*}
 P_1X_t^x = P_1 S(t)x = P_1S(t)P_0x + P_1S(t)P_1x = P_1x
\end{align*}
where we have that $S(t)P_1 = P_1$ due to $\mathrm{ran}(P_1) \subset \mathrm{ker}(A)$.
From this we conclude that $(P_0 X_t^{x})_{t\geq 0}$ satisfies 
\begin{align*}
    P_0X_t^x &= P_0S(t)x + \int_0^t P_0S(t-s)F(X_s^x)ds + \int_0^t P_0S(t-s) \sigma(X_s^x)dW_s
    \\ &\qquad \qquad +\int_0^t \int_E P_0S(t-s)\gamma(X_s^x) \widetilde{N}(ds,d\nu)
    \\ &= S(t)P_0x + \int_0^t S(t-s) P_0F(P_1x + P_0X_t^x)ds + \int_0^t S(t-s) P_0\sigma(P_1x + P_0X_s^x)dW_x
    \\ &\qquad \qquad +\int_0^t \int_E S(t-s)P_0\gamma(P_1x + P_0X_s^x)\widetilde{N}(ds,d\nu)
    \\ &= S(t)P_0x + \int_0^t S(t-s) \widetilde{F}(P_0X_t^x)ds + \int_0^t S(t-s)\widetilde{\sigma}(P_0X_s^x) dW_s
    \\ &\qquad \qquad \int_0^t \int_E S(t-s)\widetilde{\gamma}(P_0X_s^x)\widetilde{N}(ds,d\nu),
\end{align*}
where we have set $\widetilde{F}(y):= P_0 F(P_1x + y),\ \widetilde{\sigma}(y):= P_0 \sigma(P_1x + y)$ and $\widetilde{\gamma}(y,\nu):=P_0\gamma(P_1x + y,\nu)$ for all $y \in H_0$ and $\nu \in E$. Since these coefficients share the same Lipschitz estimates as $F,\sigma$ and $\gamma$, we can apply Proposition \ref{T: Lipschitz continuity} to the process $(P_0X_t^x)_{t \geq 0}$ obtained from the above auxiliary SPDE, we obtain
\begin{align*}
    \mathbb{E}[ \| X_t^x - P_1x \|_H^2 ]
    = \mathbb{E}[ \| P_0 X_t^x \|_H^2 ] 
    = \mathbb{E}[ \| P_0 X_t^x - P_0X_t^0\|_H^2 ]
    \leq \mathrm{e}^{-\varepsilon t}\mathbb{E}[\|P_0 x \|^2_H],
\end{align*}
where we have used that $P_0X_t^{0} = 0$ due to \eqref{Coefficients vanish in H1}.
\end{proof}
This theorem can be applied, for instance, to the Heath-Jarrow-Morton-Musiela equation, see Section 6.
Below we discuss two simple examples showing that, in general, conditions $\mathrm{ran}(P_1) \subset \mathrm{ker}(A)$ and \eqref{A: Vanishing Coefficients} cannot be omitted.
\begin{Example}
 Let $H = \R^2$ and $(W_t)_{t \geq 0}$ be a 2-dimensional standard Brownian motion.
\begin{enumerate}
    \item[(a)] Let $X_t = (X_t^1, X_t^2) \in H = \R^2$ be given by
    \begin{align*}
        dX_t = \begin{pmatrix} -1 & 0 \\ 0 & 0 \end{pmatrix}X_tdt + \begin{pmatrix}1 & 0 \\ 0 & 1 \end{pmatrix}dW_t.
    \end{align*}
    Then (GDC) holds for $P_1 = \begin{pmatrix} 0 & 0 \\ 0 & 1 \end{pmatrix}$,
    and $H_1 = \mathrm{ran}(P_1) = \{0\} \times \R = \mathrm{ker}(A)$.
    The semigroup is given by $S(t) = \begin{pmatrix} \mathrm{e}^{-t} & 0 \\ 0 & 1 \end{pmatrix}$ and leaves $H_0 = \mathrm{ran}(I - P_1) = \R \times \{0\}$ invariant.
    Finally we have $\e = 2 > 0$ and \eqref{A: Vanishing Coefficients}, while
    while condition \eqref{A: Vanishing Coefficients} is not satisfied.
    By $X_t^2 = W_t^2$,
    it is clear that $X_t^2$ does not have a limiting distribution.
    Hence also $X_t$ cannot have a limiting distribution.
    \item[(b)] Let $Y_t = (Y_t^1, Y_t^2) \in H = \R^2$ be the solution of
    \begin{align*}
        dY_t = \begin{pmatrix} -1 & 1 \\ 0 & 1 \end{pmatrix}Y_tdt + \begin{pmatrix}1 & 0 \\ 0 & 0 \end{pmatrix}dW_t.
    \end{align*}
    Then (GDC) holds with $P_1$ being the projection onto the second coordinate.
    Hence \eqref{A: Vanishing Coefficients} holds, while $\mathrm{ran}(P_1) \subset \mathrm{ker}(A)$ is not satisfied. Since $Y_t^2 = \mathrm{e}^t Y_0^2 + \int_0^t \mathrm{e}^{t-s}dW_s^2$ it is clear that $Y_t^2$ does not have a limiting distribution. Hence also $Y_t$ cannot have a limiting distribution.
\end{enumerate}
\end{Example}

\subsection{The General Case}

In Theorem \ref{thm: HJMcase abstract} we have assumed \eqref{A: Vanishing Coefficients}, \eqref{Coefficients vanish in H1}, and that $(S(t))_{t \geq 0}$ leaves $H_0$ invariant. 
Below we continue with the more general case.
Namely, for the projection operator $P_1$ given by condition (GDC) we decompose the process $X_t^x$ according to $X_t^x = P_0X_t^x + P_1 X_t^x$, with $P_0 = I - P_1$ and suppose that:
\begin{enumerate}\label{Assumption 4}
    \item[(A2)] The process $P_1X_t^x$ is deterministic of the form
    \begin{align}\label{eq: H1 component deterministic}
     P_1X_t^{x} = P_1S(t)P_1x + \int_0^t P_1S(t-s)P_1F(P_1X_t^{x})ds.
    \end{align}
\end{enumerate}
Our next condition imposes a control on this solution: 
\begin{enumerate}\label{Assumption 5}
 \item[(A3)] For each $x \in H_1 = \mathrm{ran}(P_1)$ there exists $\widetilde{X}_{\infty}^x \in H_1$ and constants $C(x) > 0$, $\delta(x) \in (0,|\e|)$ such that
 \[
 \| P_1X_t^{x} - \widetilde{X}_{\infty}^{x}\|_H^2
 \leq C(x)\mathrm{e}^{-\delta(x) t}, \qquad t \geq 0.
 \]
\end{enumerate} 
Note that, if $P_1 F(P_1 \cdot) = 0$ then condition (A3) reduces to a condition 
on the limiting behavior of the semigroup $(S(t))_{t \geq 0}$ when restricted to $H_1 = \mathrm{ran}(P_1)$. In such a case condition (A3) is, for instance, satisfied if $H_1 \subset \mathrm{ker}(A)$. The following is our main result for this case.
\begin{Theorem}\label{theorem: existence limits}
 Suppose that condition (GDC) holds for some projection operator $P_1$, that conditions (A1) -- (A3), \eqref{abstractgrowthsforjumpcoeff} and \eqref{definition epsilon} are satisfied. Then the following assertions hold:
 \begin{enumerate}
  \item[(a)] For each $x \in H$ there exists an invariant measure $\pi_{\delta_x} \in \mathcal{P}_2(H)$ for the Markov semigroup $(p_t)_{t \geq 0}$ and a constant $K(\alpha,\beta,\e,h) > 0$ such that
 \[
  \mathrm{W}_2(p_t(x,\cdot), \pi_{\delta_x}) \leq K(\alpha, \beta, \e, x)\mathrm{e}^{- \frac{\delta(x)}{2} t}, \qquad t \geq 0.
 \]
  \item[(b)] Suppose, in addition to the conditions of (A3), that there are constants $\delta$ and $C$, such that
  \begin{align}\label{eq:04}
   \delta(x) \geq \delta > 0 \ \ \text{ and } \ \ C(x) \leq C(1 + \| x \|_H)^4, \qquad x \in H.
  \end{align}
  Then, for each $\rho \in \mathcal{P}_2(H)$, there exists an invariant measure $\pi_{\rho} \in \mathcal{P}_2(H)$ for the Markov semigroup $(p_t)_{t \geq 0}$ and a constant $K(\alpha,\beta,\e) > 0$ such that
 \[
  \mathrm{W}_2(p_t^*\rho, \pi_{\rho}) \leq K(\alpha, \beta, \e)\int_{H}(1 + \|x\|_H)^2 \rho(dx)\mathrm{e}^{- \frac{\delta}{2} t}, \qquad t \geq 0.
 \]
 \end{enumerate}
\end{Theorem}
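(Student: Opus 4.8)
The plan is to show that for each fixed deterministic $x \in H$ the family $(p_t(x,\cdot))_{t \ge 0}$ is Cauchy in $(\mathcal{P}_2(H), \mathrm{W}_2)$ and to read off the convergence rate from a quantitative estimate; part (b) will then follow by integrating the per-point bound against $\rho$. The engine is Proposition \ref{T: Lipschitz continuity} combined with the coupling idea sketched in the introduction: fix $\tau \ge 0$, let $(\widehat{W}, \widehat{N})$ be an independent copy of the driving noise with associated solution flow $(\widehat{X}_t^y)$, and set $Z := X_\tau^x$. Since $Z$ is independent of $(\widehat{W}, \widehat{N})$ and has law $p_\tau(x,\cdot)$, the Markov property shows that $(\widehat{X}_t^x, \widehat{X}_t^Z)$ is a coupling of $p_t(x,\cdot)$ and $p_{t+\tau}(x,\cdot)$, whence by \eqref{eq: Wasserstein distance} one has $\mathrm{W}_2(p_t(x,\cdot), p_{t+\tau}(x,\cdot))^2 \le \E[\|\widehat{X}_t^x - \widehat{X}_t^Z\|_H^2]$. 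Conditioning on $Z$ and applying Proposition \ref{T: Lipschitz continuity} with initial data $x$ and $Z$ yields
\[
 \E[\|\widehat{X}_t^x - \widehat{X}_t^Z\|_H^2] \le \mathrm{e}^{-\e t}\E[\|x - X_\tau^x\|_H^2] + 2(\alpha+\beta)\int_0^t \mathrm{e}^{-\e(t-s)}\E[\|P_1\widehat{X}_s^x - P_1\widehat{X}_s^Z\|_H^2]\,ds.
\]

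The next step is to control the two terms on the right. For the integrand I would use (A2): since $P_1 X_s^y$ is deterministic and depends on $y$ only through $P_1 y$, the flow property $P_1 X_{s+\tau}^x = P_1 X_s^{X_\tau^x}$ forces the semiflow identity $P_1\widehat{X}_s^Z = P_1 X_{s+\tau}^x$, so that the integrand equals the deterministic quantity $\|P_1 X_s^x - P_1 X_{s+\tau}^x\|_H^2$. Applying (A3) with initial datum $P_1 x \in H_1$ to both $P_1 X_s^x$ and $P_1 X_{s+\tau}^x$ and using the triangle inequality bounds this by $4 C(x)\mathrm{e}^{-\delta(x) s}$; since $\delta(x) < \e$ the time integral then contributes a term of order $\tfrac{8(\alpha+\beta)C(x)}{\e - \delta(x)}\mathrm{e}^{-\delta(x) t}$. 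For the first term I would establish a uniform-in-time second moment bound $\sup_{\tau \ge 0}\E[\|X_\tau^x\|_H^2] < \infty$; this follows from a Lyapunov estimate for the single process analogous to Lemma \ref{L: Lyapunovfunction} (using (GDC) with $y=0$, (A1), \eqref{definition epsilon} and the boundedness of $P_1 X_s^x$ granted by (A3)), which yields $\mathcal{L}(\|\cdot\|_H^2)(z) \le -(\e - \eta)\|z\|_H^2 + 2(\alpha+\beta)\|P_1 z\|_H^2 + C_\eta$ for small $\eta > 0$ and hence, via the It\^o/Gronwall argument already used in Section 3, the desired bound. Consequently
\[
 \mathrm{W}_2(p_t(x,\cdot), p_{t+\tau}(x,\cdot))^2 \le K(x)^2 \,\mathrm{e}^{-\delta(x) t}, \qquad t, \tau \ge 0,
\]
with $K(x)^2 \lesssim \|x\|_H^2 + \sup_\tau \E[\|X_\tau^x\|_H^2] + \tfrac{(\alpha+\beta)C(x)}{\e - \delta(x)}$ independent of $\tau$.

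Since the right-hand side is independent of $\tau$ and tends to $0$ as $t \to \infty$, the family $(p_t(x,\cdot))_t$ is Cauchy and, by completeness of $(\mathcal{P}_2(H), \mathrm{W}_2)$, converges to some $\pi_{\delta_x} \in \mathcal{P}_2(H)$; letting $\tau \to \infty$ in the displayed estimate and using continuity of $\mathrm{W}_2$ gives the asserted rate $\mathrm{W}_2(p_t(x,\cdot),\pi_{\delta_x}) \le K(x)\mathrm{e}^{-\delta(x)t/2}$, which is the statement of (a). Invariance is then obtained from the continuity property $\mathrm{W}_2(p_s^*\mu, p_s^*\nu)^2 \le C(s)\,\mathrm{W}_2(\mu,\nu)^2$ — a direct consequence of \eqref{eq: continuous dependence on initial condition} via the same-noise coupling — applied to $p_s^* p_t(x,\cdot) = p_{t+s}(x,\cdot) \to \pi_{\delta_x}$, which forces $p_s^*\pi_{\delta_x} = \pi_{\delta_x}$ for every $s \ge 0$.

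For (b) I would run the identical coupling with random initial datum $X_0 \sim \rho$ in place of $x$, obtaining the per-point estimate for $\rho$-a.e.\ starting point and then invoking the convexity of $\mathrm{W}_2^2$ along mixtures, $\mathrm{W}_2(p_t^*\rho, p_{t+\tau}^*\rho)^2 \le \int_H \mathrm{W}_2(p_t(x,\cdot), p_{t+\tau}(x,\cdot))^2 \rho(dx)$. Here the uniformity \eqref{eq:04} is essential: it replaces $\delta(x)$ by the common rate $\delta$ and $C(x)$ by $C(1+\|x\|_H)^4$, so that the integral is controlled, up to constants, by $\int_H (1+\|x\|_H)^{4}\rho(dx)\,\mathrm{e}^{-\delta t}$, after which one extracts the $\mathrm{W}_2$-limit $\pi_\rho$ and argues invariance exactly as in (a). The main obstacle I anticipate is precisely this transfer step: recovering the clean dependence on $\int_H(1+\|x\|_H)^2\rho(dx)$ announced in the statement requires carefully tracking the growth of $C(x)$ through the square root together with the uniform moment bound, and it is here — rather than in the coupling or the Cauchy argument, which are structurally robust — that the hypotheses \eqref{eq:04} and the fourth-order growth in (A3) must be exploited with care.
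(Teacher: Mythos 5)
Your argument is essentially the paper's: the same shifted-noise/independent-copy coupling of $p_t(x,\cdot)$ and $p_{t+\tau}(x,\cdot)$, the same application of Proposition \ref{T: Lipschitz continuity}, and the same use of (A2) to make the $P_1$-integrand deterministic and of (A3) to bound it by $4C(x)\mathrm{e}^{-\delta(x)s}$, yielding a Cauchy estimate in $\mathrm{W}_2$ that is uniform in $\tau$. The one genuinely different step is your treatment of the term $\E[\|x-X_\tau^x\|_H^2]$: you propose a uniform-in-time second moment bound $\sup_{\tau\ge 0}\E[\|X_\tau^x\|_H^2]<\infty$ via a single-process Lyapunov estimate, whereas the paper avoids any such bound by splitting into the cases $\tau\le 1$ and $\tau>1$, telescoping $\mathrm{W}_2(p_{t+\tau}(x,\cdot),p_t(x,\cdot))$ over $N$ time steps of length $\varkappa=\tau/N\in(\tfrac12,1)$, and summing the resulting geometric series; only $\sup_{s\in[0,1]}\E[\|X_s^x-x\|_H^2]$, which is finite by \eqref{eq: moments mild solution}, is ever used. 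Your route works (the Young-inequality version of (GDC) with $y=0$ together with \eqref{definition epsilon} and the boundedness of $P_1X_s^x$ from (A3) does give a negative drift coefficient), but it requires a second Yosida/It\^o argument for the single process that is not in the paper, so the telescoping trick buys you a shorter path. You also supply an explicit invariance argument for the limit via the $\mathrm{W}_2$-continuity of $p_s^*$; the paper omits this, so that is a welcome addition.

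On part (b), the obstacle you flag at the end is real. With the (correct) inequality $\mathrm{W}_2(p_t^*\rho,p_{t+\tau}^*\rho)^2\le\int_H\mathrm{W}_2(p_t(x,\cdot),p_{t+\tau}(x,\cdot))^2\rho(dx)$ and $C(x)\le C(1+\|x\|_H)^4$ one lands on $\bigl(\int_H(1+\|x\|_H)^4\rho(dx)\bigr)^{1/2}\mathrm{e}^{-\delta t/2}$, which by Jensen dominates the announced constant $\int_H(1+\|x\|_H)^2\rho(dx)$ and, more importantly, requires a fourth moment of $\rho$. The paper instead invokes the unsquared convexity $\mathrm{W}_2(p_t^*\rho,p_{t+\tau}^*\rho)\le\int_H\mathrm{W}_2(p_t(x,\cdot),p_{t+\tau}(x,\cdot))\rho(dx)$, which is not valid for $\mathrm{W}_2$ in general (only $\mathrm{W}_2^2$ is jointly convex; a two-point example on $\R$ already violates the unsquared version). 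So neither route cleanly produces the stated second-moment constant; your honest assessment of this step is more accurate than the paper's own treatment, and the safe conclusion is that (b) holds with the fourth-moment constant, or with the stated constant under the additional assumption $\int_H\|x\|_H^4\rho(dx)<\infty$.
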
 
The proof of this theorem relies on the key stability estimate formulated in Proposition \ref{T: Lipschitz continuity}
and is given at the end of this section.
So far we have only shown the existence of invariant measures parametrized by the initial state of the process.
However, under the given conditions it can also be shown that $\pi_{\delta_x}$ as well as $\pi_{\rho}$ depend only on the $H_1$ part of $x$ or $\rho$, respectively.
\begin{Corollary}\label{T: Uniqueness on affine Subspaces}
 Suppose that condition (GDC) holds for some projection operator $P_1$, that conditions (A1) -- (A3), \eqref{abstractgrowthsforjumpcoeff} and \eqref{definition epsilon} are satisfied.
Then the following assertions hold:
\begin{enumerate}
    \item[(a)] Let $x,y \in H$ be such that $P_1x = P_1y$. Then $\pi_{\delta_x} = \pi_{\delta_y}$.
    \item[(b)] Suppose, in addition, that \eqref{eq:04} holds. Let $\rho, \widetilde{\rho} \in \mathcal{P}_2(H)$ be such that $\rho \circ P_1^{-1} = \widetilde{\rho} \circ P_1^{-1}$. 
    Then $\pi_{\rho} = \pi_{\widetilde{\rho}}$.
\end{enumerate}
\end{Corollary}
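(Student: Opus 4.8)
The plan is to exploit the fact that, under (A2), the $H_1$-component of the solution is a deterministic functional of the initial datum \emph{through $P_1x$ alone}, and to feed this into the contraction estimate of Proposition~\ref{T: Lipschitz continuity} so that its non-contractive integral term drops out. The crucial preliminary observation, used in both parts, is that \eqref{eq: H1 component deterministic} is a closed deterministic mild equation in $H_1$ for the curve $t \mapsto P_1X_t^x$ whose only data is the vector $P_1x$; since $P_1F$ is Lipschitz by (A1) and $(S(t))_{t\geq 0}$ is quasi-contractive, this equation has a unique solution, so we may write $P_1X_t^x = \Phi_t(P_1x)$ for a deterministic flow $\Phi_t$ on $H_1$. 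In particular, $P_1x = P_1y$ forces $P_1X_t^x = P_1X_t^y$ for every $t \geq 0$.

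For part (a), fix $x,y \in H$ with $P_1x = P_1y$, regarded as constant initial conditions. By the observation above the integrand in \eqref{Lipschitzcontinuity} vanishes identically, so Proposition~\ref{T: Lipschitz continuity} collapses to $\mathbb{E}\big[\|X_t^x - X_t^y\|_H^2\big] \leq \mathrm{e}^{-\e t}\|x-y\|_H^2$. As the joint law of $(X_t^x, X_t^y)$ is a coupling of $p_t(x,\cdot)$ and $p_t(y,\cdot)$, this yields $\mathrm{W}_2(p_t(x,\cdot), p_t(y,\cdot)) \leq \mathrm{e}^{-\e t/2}\|x-y\|_H \to 0$. Combining this with the convergences $\mathrm{W}_2(p_t(x,\cdot), \pi_{\delta_x}) \to 0$ and $\mathrm{W}_2(p_t(y,\cdot), \pi_{\delta_y}) \to 0$ supplied by Theorem~\ref{theorem: existence limits}(a), the triangle inequality for $\mathrm{W}_2$ bounds the $t$-independent quantity $\mathrm{W}_2(\pi_{\delta_x}, \pi_{\delta_y})$ by a quantity tending to $0$ as $t\to\infty$; hence $\pi_{\delta_x} = \pi_{\delta_y}$.

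For part (b) the same mechanism applies once the two initial laws are coupled so that their $P_1$-images agree pathwise. Since $H$ is Polish I would disintegrate $\rho$ and $\widetilde{\rho}$ over their common marginal $\mu_1 := \rho\circ P_1^{-1} = \widetilde{\rho}\circ P_1^{-1}$ on $H_1$, obtaining regular conditional kernels on the fibres of $P_1$, and then build a coupling $G \in \mathcal{H}(\rho,\widetilde{\rho})$ by sampling the $H_1$-coordinate once from $\mu_1$ and the two $H_0$-coordinates independently from the corresponding conditional kernels, so that $P_1x = P_1y$ holds $G$-a.s. Realising $(x,y)\sim G$ as $\mathcal{F}_0$-measurable initial data independent of the driving noise and running both mild solutions with the same noise, (A2) again gives $P_1X_t^x = P_1X_t^y$ almost surely; Proposition~\ref{T: Lipschitz continuity} then yields $\mathbb{E}\big[\|X_t^x - X_t^y\|_H^2\big] \leq \mathrm{e}^{-\e t}\,\mathbb{E}\big[\|x-y\|_H^2\big]$, whose right-hand side is finite because $\rho,\widetilde{\rho}\in\mathcal{P}_2(H)$ and vanishes as $t\to\infty$. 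Since the law of $(X_t^x, X_t^y)$ couples $p_t^*\rho$ and $p_t^*\widetilde{\rho}$, this gives $\mathrm{W}_2(p_t^*\rho, p_t^*\widetilde{\rho}) \to 0$, and the triangle inequality together with the convergences of $p_t^*\rho$ to $\pi_\rho$ and of $p_t^*\widetilde{\rho}$ to $\pi_{\widetilde{\rho}}$ from Theorem~\ref{theorem: existence limits}(b) forces $\pi_\rho = \pi_{\widetilde{\rho}}$.

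I expect the main obstacle to be the measure-theoretic construction in part (b): producing a single coupling that is simultaneously admissible for $(\rho,\widetilde{\rho})$ and concentrated on the set $\{P_1x = P_1y\}$. This is exactly where the hypothesis $\rho\circ P_1^{-1} = \widetilde{\rho}\circ P_1^{-1}$ and the Polish structure of $H$ enter, via disintegration; once this coupling is in hand the argument mirrors part (a) verbatim. A secondary, routine point to check is that Proposition~\ref{T: Lipschitz continuity} applies with random $\mathcal{F}_0$-measurable initial conditions and that the resulting $\mathbb{E}[\|x-y\|_H^2]$ is finite, both of which are guaranteed by $\rho,\widetilde{\rho}\in\mathcal{P}_2(H)$.
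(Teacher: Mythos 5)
Your proposal is correct and follows essentially the same route as the paper: both parts rest on (A2) forcing $P_1X_t^x=P_1X_t^y$ by uniqueness of the deterministic $H_1$-equation, so that the integral term in Proposition~\ref{T: Lipschitz continuity} vanishes, and part (b) uses the same diagonal coupling over the common marginal $\rho\circ P_1^{-1}$ built by disintegration. The only (cosmetic) difference is that the paper applies the stability estimate pointwise and integrates against the coupling via convexity of $\mathrm{W}_2$, whereas you apply Proposition~\ref{T: Lipschitz continuity} once to the random $\mathcal{F}_0$-measurable initial data $(x,y)\sim G$, which the proposition as stated permits.
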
 
Next we turn to a proof of Theorem \ref{theorem: existence limits} and Corollary \ref{T: Uniqueness on affine Subspaces}.

\subsection{Construction of a coupling}
Let $x \in H$ and let $(X_t^x)_{t \geq 0}$ be the unique mild solution to \eqref{SPDE mild formulation}. Below we construct for given $\tau \geq 0$ a coupling for the law of $(X_t^x, X_{t+\tau}^x)$. Let $(Y_{t}^{x,\tau})_{t \geq 0}$ be the unique mild solution to the SPDE
\begin{align}\label{SPDE mild formulation coupling}
    Y_t^{x,\tau} &= S(t)x + \int_0^t S(t-s)F(Y_s^{x,\tau})ds + \int_0^t S(t-s)\sigma(Y_s^{x,\tau})dW^{\tau}_s
    \\ \notag &\ \ \ + \int_0^t \int_E S(t-s) \gamma(Y_{s}^{x,\tau}, \nu) \widetilde{N}^{\tau}(ds,d\nu), \qquad t \geq 0,
\end{align}
where $W^{\tau}_s = W_{\tau + s} - W_{\tau}$, $\widetilde{N}^{\tau}(ds,d\nu) = \widetilde{N}(ds + \tau, d\nu) - \widetilde{N}(ds,d\nu)$ is a $Q$-Wiener process
and a Poisson random measure with respect to the filtration 
$(\mathcal{F}_s^{\tau})_{s \geq 0}$ defined by $\mathcal{F}_s^{\tau} = \mathcal{F}_{s + \tau}$.
\begin{Lemma}\label{lem: coupling}
 Suppose that (GDC), (A1), \eqref{abstractgrowthsforjumpcoeff} 
and \eqref{definition epsilon} are satisfied. 
Then for each $x \in H$ and $t, \tau \geq 0$ the following assertions hold:
 \begin{enumerate}
     \item[(a)] $Y_{t}^{x,\tau}$ has the same law as $X_t^{x}$.
     \item[(b)] It holds that
     \begin{align*}
         \E\left[ \| Y_t^{x,\tau} - X_{t+\tau}^x \|_H^2 \right]
         &\leq \mathrm{e}^{-\e t} \E\left[ \| x - X_{\tau}^x \|_H^2 \right]
         \\ &\ \ \ + 2 ( \alpha + \beta) \int_0^t \mathrm{e}^{- \e(t-s)} \E\left[ \| P_1Y_s^{x,\tau} - P_1X_{s + \tau}^x \|_H^2 \right] ds.
     \end{align*}
 \end{enumerate}
\end{Lemma}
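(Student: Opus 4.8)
The plan is to treat the two assertions separately, deriving (b) from Proposition \ref{T: Lipschitz continuity} once a suitable reformulation of $X_{t+\tau}^x$ is in place. For part (a), I would first observe that the shifted Wiener process $(W_s^\tau)_{s \geq 0}$ with $W_s^\tau = W_{\tau+s} - W_\tau$ is again a $Q$-Wiener process, and that $\widetilde N^\tau$ is again a compensated Poisson random measure with the same compensator $\dm s\,\mu(\dm\nu)$, both now relative to the shifted filtration $\mathcal{F}_s^\tau = \mathcal{F}_{s+\tau}$. This is an immediate consequence of the stationarity and independence of increments of $(W_t)_{t\geq 0}$ and $N$. Since the defining equation \eqref{SPDE mild formulation coupling} for $Y^{x,\tau}$ coincides with \eqref{SPDE mild formulation} except that the driving noise is replaced by one of the same law, while the same deterministic initial datum $x$ is used, uniqueness in law of mild solutions yields that $Y_t^{x,\tau}$ and $X_t^x$ have the same law for every $t \geq 0$.

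For part (b), the key step is to realise the time-shifted process $Z_t := X_{t+\tau}^x$ as a mild solution of the very same SPDE, driven by $(W^\tau, \widetilde N^\tau)$, but started from the random initial datum $X_\tau^x$. To this end I would split the mild formulation \eqref{SPDE mild formulation}, evaluated at time $t+\tau$, into the integrals over $[0,\tau]$ and over $[\tau,t+\tau]$, use the semigroup identity $S(t+\tau-s) = S(t)S(\tau-s)$ to factor $S(t)$ out of the $[0,\tau]$-parts, and recognise the resulting sum as $S(t)X_\tau^x$; pulling the bounded operator $S(t)$ through the stochastic integrals is justified by a stochastic Fubini argument. After the substitution $s \mapsto \tau + s$ in the remaining integrals, the stochastic integrals over $[\tau,t+\tau]$ against $(W,\widetilde N)$ become integrals over $[0,t]$ against the shifted noise $(W^\tau,\widetilde N^\tau)$, and one obtains
\begin{align*}
 Z_t &= S(t)X_\tau^x + \int_0^t S(t-s)F(Z_s)\,\dm s + \int_0^t S(t-s)\sigma(Z_s)\,\dm W_s^\tau
 \\ &\ \ \ + \int_0^t \int_E S(t-s)\gamma(Z_s,\nu)\,\widetilde N^\tau(\dm s, \dm\nu).
\end{align*}

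This is exactly the equation solved by $Y^{x,\tau}$, but with initial value $X_\tau^x \in L^2(\Omega,\mathcal{F}_0^\tau,\P;H)$ (note $\mathcal{F}_0^\tau = \mathcal{F}_\tau$, and the second moment is finite by \eqref{eq: moments mild solution}) in place of $x$. Thus $Y_\cdot^{x,\tau}$ and $Z_\cdot = X_{\cdot+\tau}^x$ are two mild solutions of one and the same SPDE, driven by the same noise $(W^\tau,\widetilde N^\tau)$, with respective initial data $x$ and $X_\tau^x$. Since, by the reasoning in part (a), Proposition \ref{T: Lipschitz continuity} applies verbatim to solutions driven by $(W^\tau,\widetilde N^\tau)$, I would apply that estimate to the pair $(Y^{x,\tau}, Z)$ with these two initial conditions, which gives precisely the asserted bound with $\E[\|x - X_\tau^x\|_H^2]$ in place of $\E[\|x-y\|_H^2]$ and $P_1 Y_s^{x,\tau} - P_1 X_{s+\tau}^x$ in the integrand.

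The main obstacle is the rigorous justification of this flow property, in particular commuting the operators $S(t)$ with the Wiener and Poisson stochastic integrals over $[0,\tau]$, which requires a stochastic Fubini theorem in the Hilbert-space setting together with careful measurability bookkeeping relative to the shifted filtration $(\mathcal{F}_s^\tau)_{s \geq 0}$ and the independence of $(W^\tau,\widetilde N^\tau)$ from $\mathcal{F}_0^\tau = \mathcal{F}_\tau$. Once the identification of $X_{\cdot+\tau}^x$ as a mild solution driven by the shifted noise and started at $X_\tau^x$ is secured, part (b) is an immediate corollary of Proposition \ref{T: Lipschitz continuity}, while part (a) follows from distributional uniqueness of mild solutions.
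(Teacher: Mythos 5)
Your proposal is correct and follows essentially the same route as the paper: part (a) via uniqueness in law (the paper invokes the Yamada--Watanabe theorem for this) combined with the observation that the shifted noises have the same law, and part (b) by rewriting $X_{t+\tau}^x$ as a mild solution driven by $(W^\tau,\widetilde N^\tau)$ with initial datum $X_\tau^x$ and then applying Proposition \ref{T: Lipschitz continuity} to the two solutions of the shifted equation. The only cosmetic difference is that commuting $S(t)$ with the stochastic integrals over $[0,\tau]$ is a standard property of the (closed) stochastic integral against a bounded operator rather than a stochastic Fubini argument, and the paper simply performs this computation directly.
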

\begin{proof}
 \textit{(a)} Since \eqref{SPDE mild formulation} has a unique solution it follows from the 
 Yamada-Watanabe Theorem (see \cite{MR2336594}) that also uniqueness in law holds for this equation.
 Since the driving noises $N^{\tau}$ and $W^{\tau}$ in \eqref{SPDE mild formulation coupling} 
 have the same law as $N$ and $W$ from \eqref{SPDE mild formulation}, it follows that 
 the unique solution to \eqref{SPDE mild formulation coupling} has the same law as the solution to \eqref{SPDE mild formulation}.
 This proves the assertion.
 
 \textit{(b)} Set $X_{t}^{x,\tau} := X_{t+ \tau}^x$, then by direct computation we find that
 \begin{align*}
     X_{t}^{x,\tau} &= S(t)S(\tau)x + \int_0^{t + \tau} S(t+\tau - s)F(X_s^x) ds 
     + \int_0^{t + \tau} S(t + \tau-s)\sigma(X_s^{x})dW_s
     \\ &\ \ \ + \int_0^{t + \tau}\int_E S(t+\tau - s)\gamma(X_s^{x},\nu)\widetilde{N}(ds,d\nu)
     \\ &= S(t)S(\tau)x + S(t)\int_{0}^{\tau}S(\tau - s)F(X_s^x)ds
     + S(t)\int_0^{\tau}S(\tau-s)\sigma(X_s^x)dW_s
     \\ &\ \ \ + S(t)\int_0^{\tau}\int_E S(\tau-s)\gamma(X_s^x,\nu)\widetilde{N}(ds,d\nu)
     \\ &\ \ \ + \int_{\tau}^{t+\tau}S(t + \tau - s)F(X_s^x)ds
     + \int_{\tau}^{t+\tau}S(t+\tau-s)\sigma(X_s^x)dW_s
     \\ &\ \ \ + \int_{\tau}^{t+\tau}\int_E S(t+\tau-s)\gamma(X_s^x,\nu)\widetilde{N}(ds,d\nu)
     \\ &= S(t)X_{0}^{x,\tau} + \int_0^{t}S(t-s)F(X_{s}^{x,\tau})ds
     + \int_0^t S(t-s)\sigma(X_s^{x,\tau})dW^{\tau}_s 
     \\ &\ \ \ + \int_0^{t}\int_E S(t-s)\gamma(X_s^{x,\tau},\nu)\widetilde{N}^{\tau}(ds,d\nu),
 \end{align*}
 where in the last equality we have used, for appropriate integrands $\Phi(s,\nu)$ and $\Psi(s)$, that
 \begin{align*}
     \int_{\tau}^{\tau + t}\Psi(s)dW_s &= \int_0^t \Psi(s+\tau)dW_s^{\tau},
     \\ \int_{\tau}^{\tau + t}\int_E \Phi(s,\nu)\widetilde{N}(ds,d\nu) &= \int_0^t \int_E \Phi(s+\tau,\nu)\widetilde{N}^{\tau}(ds,d\nu).
 \end{align*}
 Hence $(X_t^{x,\tau})_{t \geq 0}$ also solves \eqref{SPDE mild formulation coupling} with $\mathcal{F}_0^{\tau} = \mathcal{F}_{\tau}$ and initial condition $X_0^{x,\tau} = X_{\tau}^x$. Consequently, the assertion follows from Proposition \ref{T: Lipschitz continuity} applied to $X_t^{x,\tau}$ and $Y_t^{x,\tau}$.
\end{proof}

\subsection{Proof of Theorem \ref{theorem: existence limits}}

\begin{proof}[Proof of Theorem \ref{theorem: existence limits}]
Fix $x \in H$ and recall that $p_t(x,\cdot)$ denotes the transition probabilities of the Markov process obtained from \eqref{SPDE mild formulation}. Below we prove that $(p_t(x,\cdot))_{t \geq 0} \subset \mathcal{P}_2(H)$ is a Cauchy sequence with respect to the Wasserstein distance $\mathrm{W}_2$. Fix $t, \tau \geq 0$. We treat the cases $\tau \in (0,1]$ and $\tau > 1$ separately.

Case $0 < \tau \leq 1$: Then using the coupling lemma \ref{lem: coupling}.(b) yields
\begin{align*}
    \mathrm{W}_2(p_{t+ \tau}(x,\cdot), p_t(x,\cdot)) &\leq \left( \E\left[ \| Y_{t}^{x,\tau} - X_{t+\tau}^x \|_H^2 \right] \right)^{1/2}
    \\ &\leq \mathrm{e}^{- \frac{\e}{2}t} \left( \E\left[ \| X_{\tau}^x - x \|_H^2 \right] \right)^{1/2}
    \\ &\ \ \ + \sqrt{2(\alpha + \beta)} \left( \int_0^t \mathrm{e}^{- \e(t - s)} \E\left[ \| P_1Y_s^{x,\tau} - P_1X_{s + \tau}^x \|_H^2 \right] ds \right)^{1/2}
    \\ &=: I_1 + I_2.
\end{align*}
The first term $I_1$ can be estimated by
\begin{align*}
    I_1 &\leq \mathrm{e}^{- \frac{\e}{2}t} \sup_{s \in [0,1]} \left( \E\left[ \| X_s^x - x \|_H^2 \right] \right)^{1/2}.
\end{align*}
To estimate the second term $I_2$ we first observe that by  condition (A2) we have $P_1Y_s^{x,\tau} = P_1X_s^x$ and hence by condition (A3) one has for each $s \geq 0$ that
\begin{align*}
 \E\left[ \| P_1Y_s^{x,\tau} - P_1X_{s + \tau}^{x} \|_H^2 \right]
 &\leq 2 \| P_1 Y_{s}^{x,\tau} - \widetilde{X}_{\infty}^{x}\|_H^2 + 2 \| P_1 X_{s+\tau}^x - \widetilde{X}_{\infty}^{x} \|_H^2
 \\ &\leq 4C(x) \mathrm{e}^{-\delta(x) s}.
\end{align*}
This readily yields
\begin{align*}
   &\ \int_0^{t} \mathrm{e}^{- \e( t - s)} \E\left[ \| P_1Y_s^{x,\tau} - P_1X_{s + \tau}^{x} \|_H^2 \right] ds
\\    &\leq 4C(x)\int_0^{t}\mathrm{e}^{- \e (t - s)} \mathrm{e}^{- \delta(x)s} ds
    \\ &=  4C(x)\mathrm{e}^{- \e t} \frac{ \mathrm{e}^{(\e - \delta(x))t} - 1}{\e - \delta(x)}
    \\ &\leq 4C(x)\frac{\mathrm{e}^{- \delta(x)t}}{\e - \delta(x)}.
\end{align*}
Inserting this into the definition of $I_2$ gives
\begin{align*}
    I_2 &\leq 2 \sqrt{ \frac{(\alpha + \beta) C(x)}{ \e - \delta(x)}}\mathrm{e}^{- \frac{\delta(x)}{2}t}.
\end{align*}

Case $\tau > 1$: Fix some $N \in \N$ with $\tau < N < 2\tau$ and define a sequence of numbers $(a_n)_{n = 0,\dots, N}$ by
\[
 a_n := \frac{\tau}{N} n, \qquad n = 0, \dots, N.
\]
Then $a_0 = 0$, $a_N = \tau$ and $a_n - a_{n-1} = \frac{\tau}{N} =: \varkappa \in (\frac{1}{2},1)$ for $n = 1, \dots, N$. Hence we obtain from the coupling Lemma \ref{lem: coupling}.(b)
\begin{align*}
 &\mathrm{W}_2(p_{t+\tau}(x,\cdot), p_t(x,\cdot)) 
 \\ &\leq \sum_{n=1}^{N}\mathrm{W}_2( p_{t + a_{n}}(x,\cdot), p_{t+a_{n-1}}(x,\cdot))
 \\ &\leq \sum_{n=1}^{N}\left( \E\left[ \| Y_{t+a_{n-1}}^{x,\varkappa} - X_{t + a_{n-1} + \varkappa}^{x} \|_H^2 \right] \right)^{1/2}
 \\ &\leq \sum_{n=1}^{N} \mathrm{e}^{-\frac{\e}{2}(t+a_{n-1})}\left(\mathbb{E}\left[\| X_{\varkappa}^x - x\|_H^2 \right] \right)^{1/2} 
 \\ &\ \ \ + \sqrt{2(\alpha + \beta)}\sum_{n=1}^{N} \left( \int_0^{t+a_{n-1}} \mathrm{e}^{- \e(t+a_{n-1}-s)}\E\left[ \| P_1Y_s^{x,\varkappa} - P_1X_{s + \varkappa}^{x} \|_H^2 \right] ds \right)^{1/2}
 \\ &=: I_1 + I_2.
\end{align*}
For the first term $I_1$ we use $\varkappa > \frac{1}{2}$
so that 
\[
 \sum_{n=1}^{N}\mathrm{e}^{- \frac{\e}{2}\varkappa (n-1)} \leq \sum_{n=0}^{\infty} \mathrm{e}^{- \frac{\e}{4}n} = \left(1 - \mathrm{e}^{- \frac{\e}{4}}\right)^{-1},
\] 
from which we obtain
\begin{align*}
 I_1 &= \mathrm{e}^{- \frac{\e}{2} t} \sup_{s \in [0,1]}\left( \E[ \| X_{s}^x - x\|_H^2] \right)^{\frac{1}{2}} \sum_{n=1}^{N}\mathrm{e}^{- \frac{\e}{2}\varkappa (n-1)}
 \\ &\leq  \sup_{s \in [0,1]}\left( \E[ \| X_{s}^x - x\|_H^2] \right)^{\frac{1}{2}} \left( 1 - \mathrm{e}^{- \frac{\e}{4}}\right)^{-1} \mathrm{e}^{- \frac{\e}{2}t}.
\end{align*}
To estimate the second term $I_2$ we first observe that by condition (A2) we have $P_1Y_s^{x,\tau} = P_1 X_s^x$ and hence by condition (A3), one has for $s \geq 0$
\begin{align*}
 \E\left[ \| P_1Y_s^{x,\varkappa} - P_1X_{s + \varkappa}^{x} \|_H^2 \right]
 &\leq 2  \| P_1 Y_{s}^{x,\varkappa} - \widetilde{X}_{\infty}^{x}\|_H^2 + 2 \| P_1 X_{s+\varkappa}^x - \widetilde{X}_{\infty}^{x} \|_H^2
 \\ &\leq 4C(x) \mathrm{e}^{-\delta(x) s}.
\end{align*}
Hence we find that
\begin{align*}
   &\ \int_0^{t + a_{n-1}} \mathrm{e}^{- \e( t + a_{n-1} - s)} \E\left[ \| P_1Y_s^{x,\varkappa} - P_1 X_{s + \varkappa}^{x} \|_H^2 \right] ds
\\    &\leq 4C(x)\int_0^{t + a_{n-1}} \mathrm{e}^{- \e (t + a_{n-1} - s)} \mathrm{e}^{- \delta(x)s} ds
    \\ &=  4C(x)\mathrm{e}^{- \e (t + a_{n-1})} \frac{ \mathrm{e}^{(\e - \delta(x))(t + a_{n-1})} - 1}{\e - \delta(x)}
    \\ &\leq 4C(x)\frac{\mathrm{e}^{- \delta(x)t}}{\e - \delta(x)} \mathrm{e}^{- \delta(x) a_{n-1}}
    \\ &\leq 4C(x) \frac{\mathrm{e}^{- \delta(x)t}}{\e - \delta(x)} \mathrm{e}^{- \frac{ \delta(x)}{2} (n-1)}
\end{align*}
where the last inequality follows from $a_{n-1} = \varkappa (n-1) \geq \frac{1}{2}(n-1)$.
From this we readily derive the estimate
\begin{align*}
    I_2 &\leq 2 \sqrt{ \frac{(\alpha + \beta) C(x)}{ \e - \delta(x)}} \left( 1 - \mathrm{e}^{- \frac{\delta(x)}{4}} \right)^{-1}\mathrm{e}^{- \frac{\delta(x)}{2}t}.
\end{align*}
Hence we obtain 
\begin{align}\label{eq: estimate on cauchy property}
 \mathrm{W}_2(p_{t+\tau}(x,\cdot), p_t(x,\cdot)) \leq K(\alpha, \beta, \e, x)\mathrm{e}^{- \frac{\delta(x)}{2}t}, \qquad t,\tau \geq 0,
\end{align}
where the constant $K(\alpha, \beta, \e, x) > 0$ is given by
\[
K(\alpha, \beta, \e, x) = K(\e)(1 + \| x\|_H) + 2 \sqrt{ \frac{(\alpha + \beta) C(x)}{ \e - \delta(x)}} \left( 1 - \mathrm{e}^{- \frac{\delta(x)}{4}} \right)^{-1}
\]
with another constant $K(\e) > 0$.
This implies that, for each $x \in H$, 
$(p_t(x,\cdot))_{t \geq 0}$ has a limit in $\mathcal{P}_2(H)$.
Denote this limit by $\pi_{\delta_x}$. 
Assertion (a) now follows by taking the limit $\tau \to \infty$ in \eqref{eq: estimate on cauchy property} and using the fact that $K(\alpha, \beta, \e, x)$ is independent of $\tau$.

It remains to prove assertion (b). First observe that, using $\delta(x) \geq \delta > 0$ and $C(x) \leq C(1 + \| x \|_H)^4$, we have
\[
 K(\alpha,\beta, \e,x) \leq (1 + \| x \|_H)^2 \widetilde{K}(\alpha,\beta,\e)
\]
for some constant $\widetilde{K}(\alpha,\beta,\e)$. Note that 
\[
 p_t^*\rho(dy) = \int_H p_t(z,dy)\rho(dz)
 \ \ \text{ and } \ \ p_{t+\tau}^*\rho(dy) = \int_H p_{t+\tau}(z,dy)\rho(dz).
\]
Hence using first the convexity of the Wasserstein distance and then \eqref{eq: estimate on cauchy property} we find that
\begin{align*}
    \mathrm{W}_2(p_{t+\tau}^*\rho, p_t^* \rho) 
    &\leq \int_{H} \mathrm{W}_2(p_{t+\tau}(x,\cdot), p_t(x,\cdot)) \rho(dx)
    \\ &\leq \widetilde{K}(\alpha,\beta, \e) \int_H (1 + \|x \|_H)^2 \rho(dx) \cdot \mathrm{e}^{- \frac{\delta}{2}t}.
\end{align*}
Since $\rho \in \mathcal{P}_2(H)$, the assertion is proved.
\end{proof}

\subsection{Proof of Corollary \ref{T: Uniqueness on affine Subspaces}}

\begin{proof}[Proof of Corollary \ref{T: Uniqueness on affine Subspaces}]
 Recall that, by condition (A2) the process $P_1 X_t^x$ is deterministic of the form
 \[
  P_1X_t^x = P_1 S(t)P_1x + \int_0^t P_1S(t-s)F(P_1X_s^x)ds.
 \]
 Since $F$ is globally Lipschitz continuous by condition (A1), it follows that this equation has
 for each $x \in H$ a unique solution. From this we readily conclude that
 $P_1 X_t^{x}=P_1 X_t^{y}$ holds for all $t\geq 0$, provided that $P_1x = P_1y$.
 Hence Proposition \ref{T: Lipschitz continuity} yields for such $x,y$
\begin{equation}\label{Lipschitzcontinuity}
\mathbb{E}\left[\|X_t^{x}-X_t^{y}\|_H^2\right]\leq \mathrm{e}^{-\e t}\| x-y\|_{H}^2, \qquad \forall t\geq 0.
\end{equation} 
Then for each $x,y \in H$ with $P_1x = P_1y$ and each $t \geq 0$ we obtain
\begin{align*}
    \mathrm{W}_2(\pi_{\delta_x}, \pi_{\delta_y}) &\leq \mathrm{W}_2(\pi_{\delta_x}, p_t(x,\cdot))
    + \mathrm{W}_2(p_t(x,\cdot), p_t(y,\cdot)) + \mathrm{W}_2(p_t(y,\cdot), \pi_{\delta_y})
    \\ &\leq \mathrm{W}_2(\pi_{\delta_x}, p_t(x,\cdot))
    + \mathrm{e}^{- \frac{\e}{2}t}\| x - y \|_H + \mathrm{W}_2(p_t(y,\cdot), \pi_{\delta_y}).
\end{align*}
Letting $t \to \infty$ yields $\pi_{\delta_x} = \pi_{\delta_y}$ and hence assertion (a) is proved. 

To prove assertion (b), let $\rho, \widetilde{\rho} \in \mathcal{P}_2(H)$ be such that $\rho \circ P_1^{-1} = \widetilde{\rho} \circ P_1^{-1}$. Then
\begin{align*}
    \mathrm{W}_2(\pi_{\rho}, \pi_{\widetilde{\rho}}) &\leq \mathrm{W}_2( \pi_{\rho}, p_t^* \rho) + \mathrm{W}_2( p_t^*\rho, p_t^* \widetilde{\rho}) + \mathrm{W}_2( p_t^* \widetilde{\rho}, \pi_{\widetilde{\rho}})
\end{align*}
Again, by letting $t \to \infty$, it suffices to prove that
\begin{align}\label{eq:00}
 \limsup_{t \to \infty}\mathrm{W}_2( p_t^*\rho, p_t^* \widetilde{\rho}) = 0.
\end{align}
Let $G$ be a coupling of $(\rho, \widetilde{\rho})$.
Using the convexity of the Wasserstein distance and Proposition \ref{T: Lipschitz continuity} gives
\begin{align*}
    &\ \mathrm{W}_2(\mathcal{P}_t^*\rho, \mathcal{P}_t^* \widetilde{\rho})
    \\ &\leq \int_{H \times H} \mathrm{W}_2(p_t(x,\cdot), p_t(y,\cdot)) G(dx,dy)
    \\ &\leq \int_{H \times H} \left(\E\left[ \| X_t^x - X_t^y \|_H^2 \right]\right)^{1/2} G(dx,dy)
    \\ &\leq \int_{H \times H} \mathrm{e}^{-\frac{\e}{2}t} \| x -y\|_H G(dx,dy)
    \\ &\ \ \ + \sqrt{2(\alpha + \beta)}\int_{H^2} \left( \int_0^{t} \mathrm{e}^{- \e(t-s)}\E\left[ \| P_1X_s^x - P_1X_s^y \|_H^2 \right] ds \right)^{1/2} G(dx,dy)
    \\ &=: I_1 + I_2.
\end{align*}
The first term $I_1$ satisfies 
\[
 I_1 \leq \left( 2 + \int_{H} \| x \|_H^2 \rho(dx) + \int_{H} \| y \|_H^2 \widetilde{\rho}(dy) \right) \mathrm{e}^{- \frac{\e}{2}t}.
\]
For the second term we first use (A2) so that $P_1X_s^x = P_1X_s^{P_1x}$, $P_1X_s^{y} = P_1X_s^{P_1y}$ and hence we find for each $T > 0$ a constant $C(T) > 0$ such that for $t \in [0,T]$
\begin{align*}
 I_2 &= \sqrt{2(\alpha + \beta)}\int_{H_1^2} \left( \int_0^{t} \mathrm{e}^{- \e(t-s)} \| P_1X_s^x - P_1X_s^y \|_H^2  ds \right)^{1/2} G(dx, dy)
 \\ &= C(T) \left(\int_{H^2} \| P_1x - P_1y\|_H^2 G(dx,dy) \right)^{1/2}.
\end{align*}
Let us choose a particular coupling $G$ as follows:
By disintegration we write $\rho(dx) = \rho(x_1,dx_0)(\rho \circ P_1^{-1})(dx_1)$,
$\widetilde{\rho}(dx) = \widetilde{\rho}(x_1,dx_0)(\widetilde{\rho} \circ P_1^{-1})(dx_1) = \widetilde{\rho}(x_1,dx_0)(\rho \circ P_1^{-1})(dx_1)$ where $\rho(x_1,dx_0)$, $\widetilde{\rho}(x_1,dx_0)$ are transition kernels defined on $H_1 \times \mathcal{B}(H_0)$ and we have used that 
$(\rho \circ P_1^{-1})(dx_1) = (\widetilde{\rho} \circ P_1^{-1})(dx_1)$. Then $G$ is, for $A,B \in \mathcal{B}(H)$, given by
\[
 G(A \times B) := \int_{H^2} \1_{A}(x_0, x_1) \1_{B}(y_0, y_1)\rho(x_1,dx_0)\widetilde{\rho}(y_1,dy_0)\widetilde{G}(dx_1,dy_1),
\]
where $\widetilde{G}$ is a probability measure on $H_1^2$ given, for $A_1,B_1 \in \mathcal{B}(H_1)$, by
\[
 \widetilde{G}(A_1 \times B_1) = (\rho \circ P_1^{-1})(A_1 \cap B_1) = \rho\left( \{ x \in H \ | \ P_1x \in A_1 \cap B_1 \} \right).
\]
For this particular choice of $G$ we find that
\begin{align*}
 \int_{H^2} \| P_1 x - P_1y \|_H^2 G(dx,dy)
 &= \int_{H_1^2} \int_{H_0^2}\| x_1 - y_1 \|_H^2 \rho(x_1,dx_0) \widetilde{\rho}(y_1,dy_0)\widetilde{G}(dx_1,dy_1)
 \\ &= \int_{H_1^2} \| x_1 - y_1 \|_H^2 \widetilde{G}(dx_1,dy_1) = 0
\end{align*}
and hence $I_2 = 0$. This proves \eqref{eq:00} and completes the proof.
\end{proof}

\section{Ornstein-Uhlenbeck process on Hilbert space}
Let $H$ be a separable Hilbert space and let
$(Z_t)_{t \geq 0}$ be a $H$-valued L\'evy process on a stochastic basis $(\Omega, \mathcal{F}, (\mathcal{F}_t)_{t \geq 0}, \P)$ with the usual conditions. 
Then, following \cite{A15}, it has characteristic exponent $\Psi$ of L\'evy-Khinchine form, i.e.
\[
 \E\left[ \mathrm{e}^{\mathrm{i} \langle u, Z_t \rangle_H} \right]
 = \mathrm{e}^{t \Psi(u)}, \qquad u \in H, \ \ t > 0,
\] 
with $\Psi$ given by
\begin{align*}
    \Psi(u) &= \mathrm{i} \langle b, u\rangle_H - \frac{1}{2}\langle Qu,u\rangle_H + \int_H \left( \mathrm{e}^{\mathrm{i}\langle u, z\rangle_H} - 1 - \mathrm{i}\langle u,z \rangle_H \1_{ \{ \| z \|_H \leq 1\}} \right) \mu(dz),
\end{align*}
where $b \in H$ denotes the drift, $Q$ denotes the covariance operator being a positive, symmetric, trace-class operator on $H$, and $\mu$ is a L\'evy measure on $H$.
Let $(S(t))_{t \geq 0}$ be a strongly continuous semigroup on $H$. The Ornstein-Uhlenbeck process driven by $(Z_t)_{t \geq 0}$ is the unique mild solution to
\[
 dX^x_t = AX^x_tdt + dZ_t, \qquad X^x_0 = x \in H, \ \ t \geq 0,
\]
where $(A,D(A))$ denotes the generator of $(S(t))_{t \geq 0}$,
i.e. $(X^x_t)_{t \geq 0}$ satisfies
\[
 X^x_t = S(t)x + \int_0^t S(t-s)dZ_s, \qquad t \geq 0.
\]
The characteristic function of $(X^x_t)_{t \geq 0}$ is given by
\[
 \E\left[ \mathrm{e}^{\mathrm{i} \langle u, X^x_t \rangle_H} \right]
 = \exp\left( \mathrm{i}\langle S(t)x, u \rangle_H + \int_0^t \Psi(S(r)^*u)dr \right), \qquad u \in H, \ \ t \geq 0.
\]
For additional properties, references and related results 
we refer to the review article \cite{A15}
where also the existence, uniqueness and properties of invariant measures are discussed. Following these results, the Ornstein-Uhlenbeck process has a unique invariant measure provided that $(S(t))_{t \geq 0}$ is uniformly exponentially stable, that is 
\[
 \exists \alpha > 0, \ M \geq 1: \qquad \| S(t) \|_{L(H)} \leq M\mathrm{e}^{-\alpha t}, \qquad t \geq 0,
\]
and the L\'evy measure $\mu$ satisfies a $\log$-integrability condition for its big jumps
\begin{align}\label{moment condition mu}
 \int_{ \{ \| z \|_H > 1 \} } \log(1 + \|z \|_H) \mu(dz) < \infty.
\end{align}
Below we show that for a uniformly convergent semigroup $(S(t))_{t \geq 0}$ the corresponding Ornstein-Uhlenbeck process may admit multiple invariant measures parameterized by the range of the limiting projection operator of the semigroup. 
\begin{Theorem}\label{Theorem: OU process}
Suppose that $(S(t))_{t \geq 0}$ is uniformly exponentially convergent,
i.e. there exists a projection operator $P$ on $H$ and constants $M \geq 1$, $\alpha > 0$ such that
\begin{align}\label{eq: convergent semigroup}
 \| S(t)x - Px \|_H \leq M \| x \|_H \mathrm{e}^{- \alpha t}, \qquad t \geq 0, x \in H.
\end{align}
Suppose that the L\'evy process satisfies the following conditions:
\begin{enumerate}
    \item[(i)] The drift $b$ satisfies $Pb = 0$.
    \item[(ii)] The covariance operator $Q$ satisfies $PQu = 0$ for all $u \in H$.
    \item[(iii)] The L\'evy measure $\mu$ is supported on $\mathrm{ker}(P)$ and satisfies \eqref{moment condition mu}.
\end{enumerate}
Then 
\[
 X_t^x \longrightarrow Px + X_{\infty}^0, \qquad x \in H,
\]
in law, where $X_{\infty}^0$ is an $H$-valued random variable determined by
\[
 \E\left[ \mathrm{e}^{\mathrm{i} \langle u, X_{\infty}^0 \rangle_H}\right]
 = \exp\left( \int_0^{\infty} \Psi(S(r)^*u)dr \right).
\]
In particular, the set of all limiting distributions for the Ornstein-Uhlenbeck process $(X^x_t)_{t \geq 0}$ is given by $\left\{ \delta_{x} \ast \mu_{\infty} \ | \ x \in \mathrm{ker}(P) \right\}$, where $\mu_{\infty}$ denotes the law of $X_{\infty}^0$.
\end{Theorem}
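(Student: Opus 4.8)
The plan is to prove convergence in law directly from the explicit characteristic function of $X_t^x$ recalled above, and then to identify the limit. Writing $\phi_t^x(u) := \E[\mathrm{e}^{\mathrm{i}\langle u, X_t^x\rangle_H}] = \exp(\mathrm{i}\langle S(t)x, u\rangle_H + \int_0^t \Psi(S(r)^*u)\,dr)$, the deterministic part is immediate: by \eqref{eq: convergent semigroup} one has $\langle S(t)x, u\rangle_H \to \langle Px, u\rangle_H$ as $t \to \infty$. The crux is therefore to show that, for every $u \in H$, the map $r \mapsto \Psi(S(r)^*u)$ is absolutely integrable on $(0,\infty)$, so that $\int_0^t \Psi(S(r)^*u)\,dr \to \int_0^\infty \Psi(S(r)^*u)\,dr$. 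Once this is granted, $\phi_t^x(u) \to \exp(\mathrm{i}\langle Px,u\rangle_H + \int_0^\infty\Psi(S(r)^*u)\,dr) = \mathrm{e}^{\mathrm{i}\langle Px, u\rangle_H}\,\E[\mathrm{e}^{\mathrm{i}\langle u, X_\infty^0\rangle_H}]$, which is exactly the characteristic function of $Px + X_\infty^0$.

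To control $\Psi(S(r)^*u)$ I would treat its three constituents separately, in each case exploiting that the relevant L\'evy data lives in $\ker(P)$ together with the exponential convergence \eqref{eq: convergent semigroup}. For the drift, $\langle b, S(r)^*u\rangle_H = \langle S(r)b, u\rangle_H$, and since $Pb = 0$ one has $\|S(r)b\|_H = \|S(r)b - Pb\|_H \leq M\|b\|_H\mathrm{e}^{-\alpha r}$, so this term decays exponentially. For the Gaussian part, condition (ii) gives $QP^* = (PQ)^* = 0$, whence $QS(r)^*u = Q(S(r)^* - P^*)u$ and thus $\langle QS(r)^*u, S(r)^*u\rangle_H \leq \|Q\|_{L(H)}\|(S(r)^*-P^*)u\|_H\|S(r)^*u\|_H \leq C\mathrm{e}^{-\alpha r}$, again exponentially small. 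Both contributions are therefore trivially integrable on $(0,\infty)$.

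The genuine work is the jump part $\int_E(\mathrm{e}^{\mathrm{i}\langle S(r)^*u, z\rangle_H} - 1 - \mathrm{i}\langle S(r)^*u, z\rangle_H\1_{\{\|z\|_H\leq 1\}})\mu(dz)$, and here the log-moment assumption \eqref{moment condition mu} is essential. Since $\mu$ is supported on $\ker(P)$ by (iii), for such $z$ one has $\langle S(r)^*u, z\rangle_H = \langle u, S(r)z\rangle_H$ with $\|S(r)z\|_H \leq M\|z\|_H\mathrm{e}^{-\alpha r}$. For the small jumps I would use $|\mathrm{e}^{\mathrm{i}\theta} - 1 - \mathrm{i}\theta| \leq \theta^2/2$ to dominate the integrand on $\{\|z\|_H\leq 1\}$ by $\tfrac{1}{2}(M\|u\|_H\mathrm{e}^{-\alpha r})^2\|z\|_H^2$, integrable in $(r,z)$ because $\int_{\{\|z\|_H\leq 1\}}\|z\|_H^2\mu(dz) < \infty$. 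For the large jumps I would use $|\mathrm{e}^{\mathrm{i}\theta}-1| \leq \min(2, |\theta|)$ to bound the integrand by $\min(2, M\|u\|_H\|z\|_H\mathrm{e}^{-\alpha r})$ and carry out the $r$-integration first: the elementary estimate $\int_0^\infty \min(2, c\,\mathrm{e}^{-\alpha r})\,dr \leq \tfrac{2}{\alpha}(1 + \log^+(c/2))$ with $c = M\|u\|_H\|z\|_H$ shows that the double integral is dominated by a constant times $\int_{\{\|z\|_H > 1\}}(1 + \log(1 + \|z\|_H))\mu(dz) < \infty$. This is the step I expect to be the main obstacle: unlike the drift and Gaussian parts, the large-jump contribution does not decay like $\mathrm{e}^{-2\alpha r}$, and it is precisely the borderline \emph{logarithmic} growth of $\int_0^\infty\min(2,c\,\mathrm{e}^{-\alpha r})\,dr$ in $c$ that matches the log-integrability of $\mu$. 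By Fubini the jump part is then absolutely integrable on $(0,\infty)$, finishing the proof that $\int_0^\infty\Psi(S(r)^*u)\,dr$ converges.

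It remains to upgrade the pointwise convergence of characteristic functions to convergence in law and to identify the family of limits. For this I would first check that $X_\infty^0$ is a genuine $H$-valued random variable: since $\|S(t)z\|_H \leq M\|z\|_H\mathrm{e}^{-\alpha t}$ for $z \in H_0 := \ker(P)$, the restriction $S(t)|_{H_0}$ is uniformly exponentially stable, and $\mu$ is supported on $H_0$ and log-integrable, so the reduced Ornstein-Uhlenbeck process on $H_0$ admits an invariant measure $\mu_\infty$ by the theory surveyed in \cite{A15}, with characteristic function $\exp(\int_0^\infty\Psi(S(r)^*u)\,dr)$. Using the identity in law $\int_0^t S(t-s)\,dZ_s \stackrel{d}{=} \int_0^t S(s)\,dZ_s$ for each fixed $t$, together with $S(t)P = PS(t) = P$ (so $S(t)$ leaves $H_0$ invariant and $X_\infty^0 \in H_0$ almost surely), one obtains $X_t^x \stackrel{d}{=} S(t)x + \int_0^t S(s)\,dZ_s \to Px + X_\infty^0$, the stochastic integral converging to $X_\infty^0$; this yields convergence in law and simultaneously shows that $\mu_\infty$ is supported on $\ker(P)$. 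Finally, as $x$ ranges over $H$ the shift $Px$ ranges over $\mathrm{ran}(P)$, while the $\ker(P)$-supported factor $\mu_\infty$ stays fixed, so the collection of all limiting distributions is exactly $\{\delta_{Px}\ast\mu_\infty : x\in H\}$, i.e. the family parametrized by $\mathrm{ran}(P)$.
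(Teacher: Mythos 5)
Your proposal is correct and follows the same overall strategy as the paper: control the explicit characteristic function by showing $r\mapsto\Psi(S(r)^*u)$ is absolutely integrable on $(0,\infty)$, splitting $\Psi$ into drift, Gaussian, small-jump and large-jump parts and exploiting that all the L\'evy data lives in $\ker(P)$, where \eqref{eq: convergent semigroup} gives exponential decay. Two steps are handled differently, and in both cases your version is the more robust one. First, for the large jumps the paper keeps a pointwise-in-$r$ bound, claiming $\min\{1,ab\}\leq C a(1+\log(1+b))$ with $a=\|u\|_H\mathrm{e}^{-\alpha r}$, $b=\|z\|_H$, so that $I_4\leq C\|u\|_H\mathrm{e}^{-\alpha r}(\dots)$; that elementary inequality fails as stated (take $a=1/b$ with $b\to\infty$), whereas your route — apply Fubini, integrate in $r$ first, and use $\int_0^\infty\min(2,c\,\mathrm{e}^{-\alpha r})\,dr\leq \tfrac{2}{\alpha}(1+\log^+(c/2))$ — is exactly the correct way to see that the borderline logarithmic growth matches \eqref{moment condition mu}; the integrated conclusion $\int_0^\infty|\Psi(S(r)^*u)|\,dr<\infty$ is the same. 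Second, to upgrade pointwise convergence of characteristic functionals to convergence in law, the paper appeals to L\'evy's continuity theorem together with continuity of $u\mapsto\int_0^\infty\Psi(S(r)^*u)\,dr$ at $u=0$; in an infinite-dimensional Hilbert space continuity at zero alone is not sufficient, so your alternative — time-reversal $\int_0^tS(t-s)\,dZ_s\stackrel{d}{=}\int_0^tS(s)\,dZ_s$ plus reduction to the exponentially stable Ornstein--Uhlenbeck process on $H_0=\ker(P)$ (using $PS(t)=S(t)P=P$, which indeed follows from \eqref{eq: convergent semigroup} and the semigroup property) and the cited theory from \cite{A15} — closes a gap rather than merely restyling the argument. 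Finally, you correctly identify the family of limits as $\{\delta_{y}\ast\mu_\infty : y\in\mathrm{ran}(P)\}$; the theorem's ``$x\in\ker(P)$'' in the last sentence must be read as $\mathrm{ran}(P)$, since the shift in the limit is $Px$.
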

\begin{proof}
 We first prove the existence of a constant $C > 0$ such that
 \begin{align}\label{eq:01}
  \int_0^{\infty}|\Psi(S(r)^*u)|dr \leq C ( \| u \|_H + \| u \|_H^2), \qquad u \in H.
 \end{align}
 To do so we estimate
 \begin{align*}
  | \Psi(S(r)^*u) | &\leq  | \langle b, S(r)^*u \rangle | + |\langle QS(r)^*u, S(r)^*u \rangle|
 \\ &\ \ \  + \int_{ \{ \| z \|_H \leq 1 \} } \left| \mathrm{e}^{\mathrm{i}\langle S(r)^*u, z\rangle} - 1 - \mathrm{i}\langle S(r)^*u,z \rangle \right| \mu(dz)
  \\ &\ \ \  + \int_{ \{\| z \|_H > 1 \}} \left| \mathrm{e}^{\mathrm{i}\langle S(r)^*u, z\rangle} - 1\right| \mu(dz)
  \\ &= I_1 + I_2 + I_3 + I_4.
 \end{align*}
 We find by \eqref{eq: convergent semigroup} that $\| S(r)x \|_H \leq M\mathrm{e}^{- \alpha r}\| x \|_H$ for all $x \in \mathrm{ker}(P)$ and hence
 \[
  I_1 = | \langle S(r)b, u \rangle | 
  \leq \| u \|_H \| S(r)b \|_H \leq \| u \|_H M \mathrm{e}^{- \alpha r} \| b \|_H.
 \]
 For the second term $I_2$ we use $\mathrm{ran}(Q) \subset \mathrm{ker}(P)$ so that
 \[
  \| S(r)Qu \|_H \leq M\mathrm{e}^{-\alpha r} \|Qu \|_H
  \leq \mathrm{e}^{- \alpha r} \| Q \|_{L(H)} \|u \|_H.
 \]
 This yields $\| QS(r)^* \|_{L(H)} = \| S(r)Q\|_{L(H)} \leq M \mathrm{e}^{- \alpha r} \| Q \|_{L(H)}$ and hence
 \begin{align*}
  I_2 &= |\langle QS(r)^*u, S(r)^*u \rangle|
  \\ &\leq  \| QS(r)^*u \|_H \| S(r)^* u\|_H 
  \\ &\leq M\| u \|_H \| QS(r)^*u \|_{H} 
  \\ &\leq M\| u \|_H^2 \| Q \|_{L(H)}M \mathrm{e}^{-\alpha r}.
 \end{align*}
 For the third term $I_3$ we obtain
 \begin{align*}
  I_3 &\leq C \int_{ \{ \| z \|_H \leq 1\} } | \langle S(r)^*u, z \rangle|^2 \mu(dz)
  \\ &= C \int_{ \{ \| z \|_H \leq 1 \} \cap \mathrm{ker}(P) } | \langle u, S(r)z \rangle |^2 \mu(dz)
  \\ &\leq C\| u\|_H^2 \mathrm{e}^{- \alpha r} \int_{ \{ \| z \|_H \leq 1 \} } \| z \|_H^2 \mu(dz),
 \end{align*}
 where $C > 0$ is a generic constant.
 Proceeding similarly for the last term, we obtain
 \begin{align*}
  I_3 &\leq C \int_{ \{ \| z \|_H > 1 \} }  \min \left\{ 1,  | \langle S(r)^*u, z \rangle| \right\} \mu(dz)
   \\ &\leq C  \int_{ \{ \| z \|_H > 1 \} \cap \mathrm{ker}(P) } \min \left\{ 1, \| u\|_H \mathrm{e}^{- \alpha r} \| z \|_H \right\} \mu(dz)
  \\ &\leq C \| u \|_H \mathrm{e}^{- \alpha r} \left( \mu( \{ \| z \|_H > 1\} ) + \int_{ \{ \|z\|_H > 1\}} \log( 1 + \| z \|_H) \mu(dz) \right),
 \end{align*}
 where we have used, for $a = \| u\|_H \mathrm{e}^{-\alpha r}$, $b = \|z\|_H$, the elementary inequalities
 \begin{align*}
  \min\{1,ab\} &\leq C \log(1 + ab) 
  \\ &\leq C \min\{ \log(1 +a), \log(1+b)\} + C \log(1+a)\log(1+b)
  \\ &\leq C a \left( 1 + \log(1+b) \right),
 \end{align*}
 see \cite[appendix]{FJR19}.
 Combining the estimates for $I_1,I_2,I_3, I_4$ we conclude that \eqref{eq:01} is satisfied. Hence, using 
 \[
  \lim_{t \to \infty} \langle S(t)x, u \rangle = \langle Px,u \rangle
 \]
 we find that
 \begin{align}\label{eq:02}
  \lim \limits_{t \to \infty} \E\left[ \mathrm{e}^{\mathrm{i} \langle u, X_t^x \rangle} \right]
  = \exp\left( \mathrm{i}\langle Px, u \rangle + \int_0^{\infty} \Psi(S(r)^*u)dr \right).
 \end{align}
 Since, in view of \eqref{eq:01}, $u \longmapsto \int_0^{\infty}\Psi(S(r)^*u)dr$ is continuous at $u = 0$, the assertion follows from L\'evy's continuity theorem combined with the particular form of \eqref{eq:02}.
\end{proof}
Below we briefly discuss an application of this result
to a stochastic perturbation of the Kolmogorov equation associated
with a symmetric Markov semigroup.
Let $X$ be a Polish space and $\eta$ a Borel probability measure on $X$. Let $(A,D(A))$ be the generator of a symmetric Markov semigroup $(S(t))_{t \geq 0}$ on $H := L^2(X,\eta)$. Then there exists, for each $f \in D(A)$, a unique solution to the Kolmogorov equation (see, e.g., \cite{MR710486})
\[
 \frac{dv(t)}{dt} = Av(t), \qquad v(0) = f.
\]
Below we consider an additive stochastic perturbation of this equation in the sense of It\^{o}, i.e. the stochastic partial differential equation
\begin{align}\label{Kolmogorov}
 dv(t) = Av(t)dt + dZ_t, \qquad v(0) = f,
\end{align}
where $(Z_t)_{t \geq 0}$ is an $L^2(X,\eta)$-valued L\'evy process with characteristic function $\Psi$. Let $(v(t);f))_{t \geq 0}$ be the unique mild solution to this equation.
\begin{Corollary}
 Suppose that the semigroup generated by $(A,D(A))$ on $L^2(X,\eta)$ satisfies \eqref{eq: convergent semigroup} with the projection operator
 \[
  P_1v = \int_X v(x) \eta(dx),
 \]
 and assume that the L\'evy process $(Z_t)_{t \geq 0}$ satisfies 
 the conditions (i) -- (iii) of Theorem \ref{Theorem: OU process}. Then
 \[
  v(t; f) \longrightarrow \int_X f(x) \eta(dx) + v(\infty), \qquad t \to \infty
 \]
 in law, where $v(\infty)$ is a random variable whose characteristic function is given by 
 \[
 \E\left[ \mathrm{e}^{\mathrm{i} \langle u, v(\infty) \rangle_{L^2}}\right]
 = \exp\left( \int_0^{\infty} \Psi(S(r)^*u)dr \right).
 \]
\end{Corollary}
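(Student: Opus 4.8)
The plan is to recognize that the stochastically perturbed Kolmogorov equation \eqref{Kolmogorov} is, on the state space $H = L^2(X,\eta)$, precisely an Ornstein--Uhlenbeck process of the form treated in Theorem \ref{Theorem: OU process}: its generator is $(A,D(A))$, its driving noise is the $L^2(X,\eta)$-valued L\'evy process $(Z_t)_{t\geq 0}$ with characteristic exponent $\Psi$, and its initial datum is $x = f$. The entire statement will therefore follow by a direct application of that theorem, once its hypotheses have been checked and its conclusion has been translated into the present notation. Accordingly, I would first record that the required uniform exponential convergence \eqref{eq: convergent semigroup}, with limiting projection $P = P_1$, is assumed outright, and that conditions (i)--(iii) on $b$, $Q$, and $\mu$ are imposed relative to this same $P_1$. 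Thus all hypotheses of Theorem \ref{Theorem: OU process} are in force.

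Applying the theorem then yields
\[
 v(t;f) = X_t^f \longrightarrow P_1 f + X_{\infty}^0 \qquad \text{in law as } t \to \infty,
\]
where $X_{\infty}^0$ is the $H$-valued random variable whose characteristic function is $u \mapsto \exp\!\big(\int_0^{\infty} \Psi(S(r)^*u)\,dr\big)$. It remains only to identify the two terms in this limit with the quantities appearing in the statement. Since $\eta$ is a probability measure, the constant functions form a one-dimensional closed subspace of $L^2(X,\eta)$ with $\|1\|_{L^2(\eta)} = 1$, so the orthogonal projection onto it is $v \mapsto \langle v, 1\rangle_{L^2(\eta)}\,1 = \big(\int_X v\,d\eta\big)\,1$. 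Reading $P_1 f$ as this element of $L^2(X,\eta)$, we recover exactly $P_1 f = \int_X f(x)\,\eta(dx)$ (interpreted as the constant function of that value), which is the deterministic shift claimed in the limit; setting $v(\infty) := X_{\infty}^0$ then matches both the limiting law and the asserted characteristic function, completing the argument.

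Because the corollary is an immediate specialization of Theorem \ref{Theorem: OU process}, I do not expect any genuine analytic obstacle. The only point requiring care is the bookkeeping that identifies the abstract limiting projection $P$ of the theorem with the concrete averaging operator $P_1 f = \int_X f\,d\eta$ onto constants -- consistent, for a symmetric Markov semigroup, with $S(t)\mathbf 1 = \mathbf 1$ -- and the verification that the structural conditions (i)--(iii) are indeed being imposed with respect to this very projection rather than some other one.
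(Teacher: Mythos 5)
Your proposal is correct and coincides with the paper's treatment: the corollary is stated there as an immediate specialization of Theorem \ref{Theorem: OU process} to $H = L^2(X,\eta)$ with $P$ taken to be the averaging projection onto constants, and no further argument is given. Your identification of $P_1 f = \int_X f\,d\eta$ as that projection and of $v(\infty)$ with $X_\infty^0$ is exactly the intended bookkeeping.
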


\section{The Heath-Jarrow-Mortion-Musiela equation}

The Heath-Jarrow-Morton-Musiela equation (HJMM-equation) describes the term structure of interest rates in terms of its forward rate dynamics modelled, for $\beta > 0$ fixed, on the space of forward curves 
\begin{align}\label{FCS}
H_{\beta} &= \left\{ h:\mathbb{R}_+\to\mathbb{R}: h\text{ is absolutely continuous 
and } \| h \|_{\beta} < \infty \right\},
 \\ \notag \| h \|_{\beta}^2 &= \vert h(\infty)\vert^2+\int_0^{\infty}(h'(x))^2 \mathrm{e}^{\beta x}dx<\infty\
\end{align}
Such space was first motivated and introduced by Filipovic \cite{FilipoConsistencyproblemsforHeath}. Note that $h(\infty):=\lim_{x\to\infty}h(x)$ exists, whenever $h \in H_{\beta}$. It is called the \textit{long rate} of the forward curve $h$. The HJMM-equation on $H_{\beta}$ is given by
\begin{equation}
\begin{cases}\label{HJMM}
dX_t = \left(AX_t+F_{HJMM}(\sigma,\gamma)(X_t)\right) dt + \sigma(X_t)dW_t + \int_E\gamma(X_{t},\nu)\widetilde{N}(dt,d\nu),\\
X_0=h_0 \in L^2(\Omega, \mathcal{F}_0, \P; H_{\beta})
\end{cases}
\end{equation}
 where $(W_t)_{t \geq 0}$ is a $Q$-Wiener process and $\widetilde{N}(dt,d\nu)$
 is a compensated Poisson random measure on $E$ with compensator $dt \mu(d\nu)$
 as defined in Section 2 for $H := H_{\beta}$, and 
\begin{itemize}
 \item[(i)] $A$ is the infinitesimal generator of the shift semigroup $(S(t))_{t\in\mathbb{R}_+}$ on $H_{\beta}$, that is $S(t)h(x):=h(x+t)$ for all $t,x\geq 0$.
 \item[(ii)] $h\mapsto \sigma(h)$ is a $\mathcal{B}(H_{\beta})/\mathcal{B}(L_2^0)$-measurable mapping from  $H_{\beta}$ into $L_2^0(H_{\beta})$ and $(h,\nu)\mapsto\gamma(h,\nu)$ is $\mathcal{B}(H_{\beta})\otimes\mathcal{E}/\mathcal{B}(H_{\beta})$-measurable mapping from  $H_{\beta}\times E$ into $H_{\beta}$. 
 \item[(iii)] The drift is of the form
 \begin{align*}
F_{HJMM}(\sigma,\gamma)(h) = \sum_{j\in \mathbb{N}}\sigma^j(h) \Sigma^j(h)-\int_E\gamma(h,\nu)(\mathrm{e}^{\Gamma(h,\nu)}-1)\mu(d\nu),
\end{align*}
 with $\sigma^j(h) = \sqrt{\lambda_j}\sigma(h)e_j$, 
 \[
  \Sigma^j(h)(t) = \int_0^t\sigma^j(h)(s)ds \ \text{ and } \ \Gamma(h,\nu)(t) = -\int_0^t \gamma(h,\nu)(s)ds.
 \]
\end{itemize}
The special form of the drift stems from mathematical finance and is sufficient
for the absence of arbitrage opportunities. We denote the space of all forward rates with long rate equal to zero by
$$H_{\beta}^0=\lbrace h\in H_{\beta}: h(\infty)=0\rbrace.$$
For the construction of a unique mild solution to \eqref{HJMM} the following conditions have been introduced in \cite{Filipovi2010term}:
\begin{enumerate}
    \item[(B1)] $\sigma:H_{\beta}\to L_2^0(H_{\beta}^0)$, $\gamma:H_{\beta}\times E \to H_{\beta'}^0$ are Borel measurable for some $\beta' > \beta$. 
    \item[(B2)] There exists a function $\Phi:E\to \mathbb{R_+}$ such that $\Phi(\nu)\geq\vert \Gamma(h,\nu)(t)\vert$ for all $h\in H_{\beta}$, $\nu\in E$ and $t\geq 0$. 
    \item[(B3)] There is an $M\geq 0$ such that, for all $h\in H_{\beta}$, and some $\beta' > \beta$
\begin{align*}
\| \sigma(h)\|_{L_2^0(H_{\beta})} \leq M,\quad  \int_E \mathrm{e}^{\Phi(\nu)} \max\lbrace \|\gamma(h,\nu)\|_{\beta'}^2,\|\gamma(h,\nu)\|_{\beta'}^4\rbrace \mu(d\nu) \leq M.
\end{align*}
\item[(B4)] The function
$F_2:H_{\beta}\to H_{\beta}^0$ defined by 
 \[
  F_2(h)=-\int_E \gamma(h,\nu) \left(\mathrm{e}^{\Gamma(h,\nu)}-1 \right)\mu(d\nu)
 \]
 has the weak derivative given by 
 \[
  \frac{d}{dx}F_2(h)=\int_E \gamma(h,\nu)^2\left( \mathrm{e}^{\Gamma(h,\nu)} \right)\mu(d\nu)-\int_E \left(\frac{d}{dx}\gamma(h,\nu)\right)\left(\mathrm{e}^{\Gamma(h,\nu)}-1\right)\mu(d\nu).
 \]
\item[(B5)] There are constants $L_{\sigma}, L_{\gamma}>0$ such that, for all $h_1,h_2 \in H_{\beta}$, we have
 \begin{align*}
\| \sigma(h_1)-\sigma(h_2)\|_{L_2^0(H_{\beta})}^2 \leq L_{\sigma} \| h_1 - h_2 \|_{\beta}^2,
\\ \int_E \mathrm{e}^{\Phi(\nu)} \| \gamma(h_1,\nu)-\gamma(h_2,\nu)\|_{\beta'}^2 \mu(d\nu)\leq L_{\gamma} \| h_1 - h_2 \|_{\beta}^2.
\end{align*}
\end{enumerate} 
The following is the basic existence and uniqueness result for the Heath-Jarrow-Morton-Musiela equation \eqref{HJMM}.
\begin{Theorem}\cite{Filipovi2010term}\label{T: HJMUNIQUEANDEXIST}
 Suppose that conditions (B1) -- (B5) are satisfied.
 Then $F_{HJMM}: H_{\beta} \longrightarrow H_{\beta}^0$ and there exists a constant $L_{F}>0$ such that, for each $h_1,h_2\in H_{\beta}$,
 \begin{align}\label{eq: HJM drift lipschitz}
  \| F_{HJMM}(h_1) - F_{HJMM}(h_2)\|_{\beta}^2\leq L_F \| h_1 -h_2\|_{\beta}^2.
 \end{align}
 This constant can be choosen as 
\begin{equation}\label{Kestimate}
L_F = \frac{\max(L_{\sigma},L_{\gamma})\sqrt{M}}{\beta}\left(\sqrt{6M\sqrt{2}} +\sqrt{\frac{8}{\beta^3}   + \frac{16}{\beta} }+\sqrt{ \frac{16  (1+\frac{1}{\sqrt{\beta}})^2+48}{(\beta'-\beta)}} \right).
\end{equation}
Moreover, for each initial condition $h \in L^2(\Omega, \mathcal{F}_0, \P; H_{\beta})$ there is a unique adapted, c\'{a}dl\'{a}g mild solution $(r_t)_{t\geq 0}$ to \eqref{HJMM}.
\end{Theorem}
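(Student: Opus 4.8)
The plan is to exploit the additive structure of the drift, writing $F_{HJMM} = F_1 + F_2$ with $F_1(h) = \sum_{j} \sigma^j(h)\Sigma^j(h)$ the diffusion (HJM) part and $F_2$ the jump part from (B4), and to estimate each in the $\|\cdot\|_{\beta}$-norm separately. The first reduction is to verify that both $F_1$ and $F_2$ map into $H_{\beta}^0$: since $\sigma^j(h)\in H_{\beta}^0$ by (B1) we have $\sigma^j(h)(\infty)=0$ while $\Sigma^j(h)(\infty)$ stays finite, so $F_1(h)(\infty)=0$; likewise $\gamma(h,\nu)\in H_{\beta'}^0\subset H_{\beta}^0$ and $\mathrm{e}^{\Gamma(h,\nu)}-1$ is bounded via (B2), giving $F_2(h)(\infty)=0$. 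Because the long-rate contribution to $\|\cdot\|_{\beta}$ then vanishes, it suffices to control the weighted $L^2$-norm of the weak derivatives, that is $\int_0^{\infty}\big|\tfrac{d}{dx}(F_i(h_1)-F_i(h_2))(x)\big|^2\mathrm{e}^{\beta x}\,dx$ for $i=1,2$.

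For the diffusion part I would telescope the product,
\[
\sigma^j(h_1)\Sigma^j(h_1) - \sigma^j(h_2)\Sigma^j(h_2) = (\sigma^j(h_1) - \sigma^j(h_2))\Sigma^j(h_1) + \sigma^j(h_2)(\Sigma^j(h_1) - \Sigma^j(h_2)),
\]
differentiate using $\tfrac{d}{dx}\Sigma^j(h)=\sigma^j(h)$, and estimate pointwise. The basic tool is the Sobolev-type bound $|g(x)|\leq \beta^{-1/2}\mathrm{e}^{-\beta x/2}\|g\|_{\beta}$ for $g\in H_{\beta}^0$, which turns the factors $\Sigma^j(h)$ and $\sigma^j(h)$ into quantities controlled by $\|\sigma(h)\|_{L_2^0(H_{\beta})}$; summing over $j$ collapses to the Hilbert--Schmidt norm, bounded by $M$ through (B3), while the increments are controlled by $L_{\sigma}$ via (B5). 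Careful bookkeeping of the resulting weight integrals, all of the form $\int_0^{\infty}\mathrm{e}^{-c\beta x}\,dx$, produces the two summands $\sqrt{6M\sqrt{2}}$ and $\sqrt{8/\beta^3+16/\beta}$ in \eqref{Kestimate} (those not involving $\beta'-\beta$).

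For the jump part I would start from the weak-derivative formula of (B4) and split $\tfrac{d}{dx}(F_2(h_1)-F_2(h_2))$ into a contribution carrying the increment $\gamma(h_1,\nu)-\gamma(h_2,\nu)$ together with its derivative, and one carrying $\mathrm{e}^{\Gamma(h_1,\nu)}-\mathrm{e}^{\Gamma(h_2,\nu)}$. The second is handled by the elementary inequality $|\mathrm{e}^{a}-\mathrm{e}^{b}|\leq \mathrm{e}^{\max\{a,b\}}|a-b|$ together with the uniform bound $|\Gamma(h,\nu)(x)|\leq \Phi(\nu)$ from (B2), which also lets the $\mathrm{e}^{\Phi(\nu)}$-weighted moment and Lipschitz bounds of (B3) and (B5) absorb the remaining $\mu$-integral, bringing in $L_{\gamma}$. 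The mismatch between the $H_{\beta'}$-control on $\gamma$ and the target $\|\cdot\|_{\beta}$-norm is resolved by the spectral gap $\beta'>\beta$: bounding $\gamma(h,\nu)(x)^2\leq \beta'^{-1}\mathrm{e}^{-\beta' x}\|\gamma(h,\nu)\|_{\beta'}^2$ and integrating against $\mathrm{e}^{\beta x}$ yields factors $\int_0^{\infty}\mathrm{e}^{-(\beta'-\beta)x}\,dx=(\beta'-\beta)^{-1}$, which is the origin of the third summand $\sqrt{(16(1+1/\sqrt{\beta})^2+48)/(\beta'-\beta)}$. Collecting the three contributions and pulling out the common prefactor $\max(L_{\sigma},L_{\gamma})\sqrt{M}/\beta$ gives the claimed value.

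I expect the main obstacle to be neither estimate in isolation but the quartic growth in $\gamma$: since the derivative of $F_2$ contains the square $\gamma(h,\nu)^2$, the $\|\cdot\|_{\beta}$-estimate forces fourth-order expressions in $\gamma$, which is precisely why (B3) is stated with $\|\gamma(h,\nu)\|_{\beta'}^4$; keeping the $\mathrm{e}^{\Phi(\nu)}$-weight aligned through every H\"older and Cauchy--Schwarz splitting so that (B3) and (B5) remain applicable is the delicate point, and it is also what fixes the exact numerical constants. Once \eqref{eq: HJM drift lipschitz} is established the remaining claim is routine: by (B5) the coefficients $\sigma$ and $\gamma$ are globally Lipschitz, the estimate just proven makes $F_{HJMM}$ globally Lipschitz, and the shift semigroup $(S(t))_{t\geq 0}$ is a contraction on $H_{\beta}$; hence the standard Banach fixed-point construction for mild solutions of Lipschitz SPDEs driven by a $Q$-Wiener process and a compensated Poisson random measure yields, for each initial condition in $L^2(\Omega,\mathcal{F}_0,\P;H_{\beta})$, a unique adapted c\`adl\`ag mild solution $(r_t)_{t\geq 0}$, as in the references cited in Section 2.
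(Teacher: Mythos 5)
The paper does not actually prove this theorem: its ``proof'' is a one-line citation to \cite{Filipovi2010term}, with the remark that the explicit constant \eqref{Kestimate} falls out of the derivation there. Your proposal therefore goes beyond the paper by reconstructing the argument of the cited reference, and the reconstruction is sound in outline: the decomposition $F_{HJMM}=F_1+F_2$, the verification that both parts land in $H_{\beta}^0$ so that only the weighted derivative norm matters, the telescoping of $\sigma^j(h)\Sigma^j(h)$ together with $\tfrac{d}{dx}\Sigma^j(h)=\sigma^j(h)$, the pointwise bound $|g(x)|\leq \beta^{-1/2}\mathrm{e}^{-\beta x/2}\|g\|_{\beta}$ for $g\in H_{\beta}^0$ (valid since $g(x)=-\int_x^{\infty}g'(y)\,dy$ plus Cauchy--Schwarz), the role of $(\beta'-\beta)^{-1}$ in transferring $H_{\beta'}$-control of $\gamma$ to the $\|\cdot\|_{\beta}$-norm, and the observation that the quartic moment in (B3) is forced by the square $\gamma(h,\nu)^2$ in the derivative of $F_2$ are all exactly the mechanisms used in the reference. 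You also correctly note that the shift semigroup is a contraction on $H_{\beta}$, which together with the Lipschitz bounds gives the well-posedness claim by the standard fixed-point argument. The one place where your write-up falls short of the statement is the exact numerical value of $L_F$ in \eqref{Kestimate}: you identify which integral each of the three summands comes from, but the ``careful bookkeeping'' is precisely where the content of that formula lies, and without carrying it out the specific constants $6M\sqrt{2}$, $8/\beta^3+16/\beta$, and $16(1+1/\sqrt{\beta})^2+48$ remain unverified. Either carry that computation through or, as the paper does, attribute the explicit constant to the derivation in \cite{Filipovi2010term}; for everything else in the paper only the finiteness of $L_F$ and the fact that $L_F\to 0$ as $\beta\to\infty$ are used.
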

\begin{proof}
This result can be found essentially in \cite{Filipovi2010term}, where the bound on $L_F$ is an immediate result from its derivation.
\end{proof}
Using the space of all functions with zero long rate we obtain the decomposition
\[
 H_{\beta} = H_{\beta}^0 \oplus \R, \qquad h = (h - h(\infty)) + h(\infty),
\]
where $h(\infty) \in \R$ is identified with a constant function. Denote by 
\begin{align*}
 P_0h = h - h(\infty) \ \text{ and } \ P_1 h = h(\infty)
\end{align*}
the corresponding projections onto $H_{\beta}^0$ and $\R$, respectively.
Such a decomposition of $H_{\beta}$ was first used in \cite{Tehranchi2005} to study invariant measures for the HJMM-equation driven by a $Q$-Wiener process. An extension to the L\'evy driven HJMM-equation was then obtained in \cite{Rusinek10}. The next theorem shows that the results of Section 4 contain the HJMM-equation as a particular case.
\begin{Theorem}\label{T: Ergodicity for HJM}
Suppose that conditions (B1) -- (B5) are satisfied.
If 
\begin{align}\label{HJMcontraction condition}
 \beta > 2 \sqrt{L_F} + L_{\sigma} + L_{\gamma},
\end{align}
then for each initial distribution $\rho$ on $H_{\beta}$ with finite second moments there exists an invariant measure $\pi_{\rho}$ and it holds that
\begin{align}\label{ergodicity HJMM}
\mathrm{W}_2(\mathcal{P}_t^*\rho, \pi_{\rho}) \leq K\left( 1 + \int_{H_{\beta}} \| h \|_{H_{\beta}}^2 \rho(dh) + \int_{H_{\beta}} \| h \|_{H_{\beta}}^2 \pi_{\rho}(dh) \right) \mathrm{e}^{- \frac{\beta - 2\sqrt{L_F} - L_{\sigma} - L_{\gamma}}{2} t}
\end{align}
for some constant $K = K(\beta, \sigma, \gamma) > 0$.
Moreover, given $\rho, \widetilde{\rho}$
such that $\rho \circ P_1^{-1} = \widetilde{\rho} \circ P_1^{-1}$, then $\pi_{\rho} = \pi_{\widetilde{\rho}}$.
\end{Theorem}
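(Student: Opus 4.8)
The plan is to realize the HJMM-equation \eqref{HJMM} as a special case of the abstract framework of Section 4 and then to invoke Theorem \ref{theorem: existence limits}(b) together with Corollary \ref{T: Uniqueness on affine Subspaces}(b). To this end I would work on $H = H_\beta$ with the orthogonal projection $P_1 h = h(\infty)$ onto the constants and $P_0 = I - P_1$, using that $H_\beta = H_\beta^0 \oplus \R$ is an orthogonal decomposition for $\langle\cdot,\cdot\rangle_\beta$ and that $F_{HJMM}$ is globally Lipschitz with constant $L_F$ by Theorem \ref{T: HJMUNIQUEANDEXIST}. The bulk of the work is to check that the hypotheses (GDC), (A1)--(A3), \eqref{abstractgrowthsforjumpcoeff} and \eqref{definition epsilon} all hold, after which the two abstract results apply.

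First I would verify (GDC). The shift semigroup fixes the long rate and leaves $H_\beta^0$ invariant, and a direct computation with the norm \eqref{FCS} gives
\[
 \| S(t)h - P_1 h \|_\beta^2 = \int_0^\infty (h'(x+t))^2 \mathrm{e}^{\beta x}\,dx = \mathrm{e}^{-\beta t}\int_t^\infty (h'(y))^2 \mathrm{e}^{\beta y}\,dy \leq \mathrm{e}^{-\beta t}\| h - P_1 h\|_\beta^2,
\]
so that \eqref{eq: ergodic 00} holds with $\lambda_0 = \beta/2$. By Remark \ref{example: ergodic semigroup} the hypotheses of Proposition \ref{prop: orthogonal decomposition of semigroup} are met with $H_0 = H_\beta^0$, $H_1 = \R$, $\lambda_0 = \beta/2$ and $\lambda_1 = 0$, whence \eqref{eq: generalized dissipativity for A} holds. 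Combined with the Lipschitz bound for $F_{HJMM}$ via \eqref{eq:03}, this yields (GDC) with $\alpha = \beta/2 - \sqrt{L_F}$ (legitimate once $\beta > 2\sqrt{L_F}$), and hence $\e = 2\alpha - L_\sigma - L_\gamma = \beta - 2\sqrt{L_F} - L_\sigma - L_\gamma$; condition \eqref{HJMcontraction condition} is precisely $\e > 0$, i.e.\ \eqref{definition epsilon}.

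Next I would deduce (A1), the moment bound \eqref{eq: moment condition nu} and the fourth-moment bound \eqref{abstractgrowthsforjumpcoeff} from (B3) and (B5) using the two elementary facts $\|\cdot\|_\beta \leq \|\cdot\|_{\beta'}$ (since $\beta' > \beta$) and $\mathrm{e}^{\Phi(\nu)} \geq 1$; the $\gamma$-part of (A1) and both growth bounds then follow at once, and the $\sigma$-part is immediate. The decisive structural point is (A2)--(A3): since $F_{HJMM}$, $\sigma$ and $\gamma$ all take values in $H_\beta^0 = \ker P_1$ (by (iii) and (B1)), applying $P_1$ to the mild formulation \eqref{SPDE mild formulation} and using $P_1 S(t) = S(t)P_1$ together with $S(t)P_1 = P_1$ annihilates the drift, diffusion and jump integrals, leaving $P_1 X_t^x = P_1 x$ for every $t$. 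Thus the $H_1$-component is deterministic and constant in $t$: condition (A2) holds, and (A3) holds with $\widetilde{X}_\infty^x = P_1 x$, $C(x) = 0$ and $\delta(x)$ any value in $(0,\e)$, so that in particular the uniform requirement \eqref{eq:04} is met. This is exactly the observation that the long rate is a constant of motion, which is the source of the multiplicity of invariant measures.

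With all hypotheses in place, Theorem \ref{theorem: existence limits}(b) supplies, for each $\rho \in \mathcal{P}_2(H_\beta)$, an invariant measure $\pi_\rho$ with a Wasserstein estimate; since $\int(1+\|h\|_\beta)^2\rho(dh) \leq 2\bigl(1 + \int\|h\|_\beta^2\rho(dh)\bigr)$, this gives \eqref{ergodicity HJMM}, the $\pi_\rho$-moment term on the right being a harmless overestimate. To recover the sharp exponent $\e/2$ in place of the general $\delta/2$, I would note that $C(x)=0$ forces the second summand of the coupling estimate in Lemma \ref{lem: coupling}(b), namely $\int_0^t \mathrm{e}^{-\e(t-s)}\E[\|P_1 Y_s^{x,\tau} - P_1 X_{s+\tau}^x\|_\beta^2]\,ds$, to vanish, because $P_1 Y_s^{x,\tau} = P_1 X_s^x = P_1 x = P_1 X_{s+\tau}^x$; tracing this through the proof of Theorem \ref{theorem: existence limits} eliminates the term $I_2$ and leaves the rate governed by $I_1$ at $\mathrm{e}^{-\e t/2}$. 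The final assertion $\pi_\rho = \pi_{\widetilde{\rho}}$ whenever $\rho\circ P_1^{-1} = \widetilde{\rho}\circ P_1^{-1}$ then follows from Corollary \ref{T: Uniqueness on affine Subspaces}(b). I expect the only genuinely delicate step to be the verification of (GDC): one must compute the contraction rate of the shift semigroup on $H_\beta$ correctly and track the constants so that $\e$ emerges as $\beta - 2\sqrt{L_F} - L_\sigma - L_\gamma$; everything else is routine bookkeeping resting on the constancy of the long rate.
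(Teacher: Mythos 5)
Your proposal is correct and follows essentially the same route as the paper: verify (GDC) via the contraction of the shift semigroup towards the long rate (Remark \ref{example: ergodic semigroup} with $\lambda_0=\beta/2$, $\lambda_1=0$), check (A1)--(A3), \eqref{abstractgrowthsforjumpcoeff} and \eqref{definition epsilon} from (B1)--(B5) and the fact that the coefficients map into $H_\beta^0=\ker P_1$, and then invoke Theorem \ref{theorem: existence limits}(b) and Corollary \ref{T: Uniqueness on affine Subspaces}(b). Your explicit tracing of why $C(x)=0$ kills the $I_2$ term and yields the sharp rate $\e/2$ is exactly what the paper means by ``the particular form of the estimate follows from the proof of Theorem \ref{theorem: existence limits}.''
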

\begin{proof}
 Observe that the assertion is an immediate consequence of Theorem \ref{theorem: existence limits} and Corollary \ref{T: Uniqueness on affine Subspaces}. Below we briefly verify the assumptions given in these statements. Condition (A1) follows from (B1), (B5), and \eqref{eq: HJM drift lipschitz}
 while the growth condition \eqref{abstractgrowthsforjumpcoeff} is satisfied by (B3) and the fact that $\| \cdot \|_{\beta} \leq \| \cdot \|_{\beta'}$ for $\beta < \beta'$. It is not difficult to see that
 \[
  \| S(t)h - P_1h \|_{\beta} \leq \mathrm{e}^{-\frac{\beta}{2}t} \| h - P_1h \|_{\beta}, \qquad t \geq 0
 \]
 and that $(S(t))_{t \geq 0}$ leaves $H_{\beta}^0$ as well as $\R \subset H_{\beta}$ invariant. 
 Hence Remark \ref{example: ergodic semigroup} yields that
 \[
  \langle Ah,h \rangle \leq -\frac{\beta}{2}\|h\|_{\beta}^2 + \frac{\beta}{2} \|P_1h\|_{\beta}^2, \qquad h \in D(A).
 \]
 It follows from the considerations in Section 2 (see \eqref{eq:03}) that (GDC) is satisfied for $\alpha = \frac{\beta}{2} - \sqrt{L_F}$.
 Consequently, $\e = \beta - 2\sqrt{L_F} - L_{\sigma} - L_{\gamma}$ and \eqref{definition epsilon} holds due to \eqref{HJMcontraction condition}.
 Since the coefficients map into $H_{\beta}^0$ and $S(t) P_1 h =h(\infty)=P_1 h$, conditions (A2), (A3) and \eqref{eq:04} are trivially satisfied. The particular form of the estimate \eqref{ergodicity HJMM} follows from the proof of Theorem \ref{theorem: existence limits}.
\end{proof}
Comparing our result with \cite{Tehranchi2005, Rusinek10},
we allow for a more general jump noise and prove convergence in the stronger Wasserstein distance with an exponential rate. 
Moreover, assuming that the volatilities map constant functions onto zero,
i.e. 
\begin{align}\label{HJMVolatilities Zero in Constants}
\sigma(c)\equiv 0,\quad
\gamma(c,\nu)\equiv 0, \qquad \forall c \in \R \subset H_{\beta}, \ \nu \in E
\end{align}
shows that $F(c) \equiv 0$ and hence also \eqref{Coefficients vanish in H1} is satisfied.
Hence we may apply Theorem \ref{thm: HJMcase abstract} to characterize these invariant measures more explicitly.
\begin{Corollary}\label{C: Identifiable Limits for HJM}
Suppose that conditions (B1) -- (B5) are satisfied, that \eqref{HJMcontraction condition} and \eqref{HJMVolatilities Zero in Constants} hold.
Then 
\[
 \E\left[ \| X_t^h - h(\infty)\|_{\beta}^2 \right]
 \leq  \E\left[ \| h - h(\infty)\|_{\beta}^2 \right]\mathrm{e}^{- \left(\beta - 2 \sqrt{L_F} - L_{\sigma} - L_{\gamma}\right)t}
\]
for each $h\in L^2(\Omega,\mathbb{F}_0,\mathbb{P},H)$.
\end{Corollary}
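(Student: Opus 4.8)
The plan is to derive the estimate directly from the abstract result Theorem \ref{thm: HJMcase abstract}, specialized to the HJMM setting with $H = H_{\beta}$, projection $P_1 h = h(\infty)$, and decomposition $H_0 = H_{\beta}^0$, $H_1 = \R$. Under this identification $P_1 h = h(\infty)$ and $(I - P_1)h = h - h(\infty)$, so the conclusion of that theorem reads precisely as the claimed inequality, with exponential rate $\varepsilon = \beta - 2\sqrt{L_F} - L_{\sigma} - L_{\gamma}$ — the same quantity already computed in the proof of Theorem \ref{T: Ergodicity for HJM}. Thus the entire task reduces to checking that every hypothesis of Theorem \ref{thm: HJMcase abstract} holds under (B1)--(B5), \eqref{HJMcontraction condition}, and the extra assumption \eqref{HJMVolatilities Zero in Constants}.

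First I would recall that (A1), \eqref{abstractgrowthsforjumpcoeff}, (GDC) (with $\alpha = \beta/2 - \sqrt{L_F}$) and \eqref{definition epsilon} have already been established under (B1)--(B5) and \eqref{HJMcontraction condition} in the proof of Theorem \ref{T: Ergodicity for HJM}; these carry over verbatim. Next I would verify the two semigroup conditions for the shift semigroup $S(t)h(x) = h(x+t)$. Since $(S(t)h)(\infty) = h(\infty)$, the map $S(t)$ sends $H_{\beta}^0 = \mathrm{ker}(P_1)$ into itself, giving invariance of $H_0$. Moreover every constant function $c$ satisfies $S(t)c = c$, equivalently $Ac = 0$, so $\mathrm{ran}(P_1) = \R \subset \mathrm{ker}(A)$.

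The essential structural input is the vanishing of the projected coefficients \eqref{A: Vanishing Coefficients}, which holds because all coefficients of \eqref{HJMM} take values in the zero-long-rate space: by (B1) one has $\sigma(h) \in L_2^0(H_{\beta}^0)$ and $\gamma(h,\nu) \in H_{\beta'}^0 \subset H_{\beta}^0$, and by Theorem \ref{T: HJMUNIQUEANDEXIST} one has $F_{HJMM}(h) \in H_{\beta}^0$. As $H_{\beta}^0 = \mathrm{ker}(P_1)$, applying $P_1$ annihilates each of them, so $P_1 F \equiv P_1 \sigma \equiv P_1 \gamma \equiv 0$. Finally, the pointwise condition \eqref{Coefficients vanish in H1} evaluated at $P_1 h = h(\infty)$ (a constant function) is exactly the content of \eqref{HJMVolatilities Zero in Constants}: it furnishes $\sigma(h(\infty)) = 0$ and $\gamma(h(\infty),\cdot) = 0$, whence both the Wiener sum $\sum_{j} \sigma^j \Sigma^j$ and the jump integral in the drift formula $F_{HJMM}$ vanish, giving $F_{HJMM}(h(\infty)) = 0$ as well.

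With all hypotheses in place, Theorem \ref{thm: HJMcase abstract} yields $\E[\|X_t^h - h(\infty)\|_{\beta}^2] \leq \mathrm{e}^{-\varepsilon t}\E[\|h - h(\infty)\|_{\beta}^2]$ with $\varepsilon = \beta - 2\sqrt{L_F} - L_{\sigma} - L_{\gamma}$, which is the asserted estimate. I do not anticipate a genuine obstacle: this is a clean specialization of the abstract theorem, and the only points requiring a little care are confirming that the HJMM coefficients really land in $H_{\beta}^0$ (so that \eqref{A: Vanishing Coefficients} holds) and that \eqref{HJMVolatilities Zero in Constants} propagates from $\sigma,\gamma$ to the induced drift $F_{HJMM}$ — both of which are immediate from the definitions.
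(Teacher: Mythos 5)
Your proposal is correct and follows exactly the route the paper intends: the paper states the corollary as an immediate application of Theorem \ref{thm: HJMcase abstract}, having noted just before it that the coefficients map into $H_{\beta}^0=\mathrm{ker}(P_1)$ (giving \eqref{A: Vanishing Coefficients}) and that \eqref{HJMVolatilities Zero in Constants} forces $F_{HJMM}(c)\equiv 0$, hence \eqref{Coefficients vanish in H1}. Your write-up merely makes explicit the hypothesis checks (invariance of $H_{\beta}^0$ under the shift semigroup, $\R\subset\mathrm{ker}(A)$, and the rate $\e=2\alpha-L_{\sigma}-L_{\gamma}=\beta-2\sqrt{L_F}-L_{\sigma}-L_{\gamma}$) that the paper leaves implicit.
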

We close this section by applying our results for the particular example of coefficients as introduced in \cite{Rusinek10}.
\begin{Example}
 Take 
 \[
  \sigma^1(h)(x) := \int_x^{\infty}\min\left( \mathrm{e}^{- \beta y},\ |h'(y)| \right)dy
 \] 
and $\sigma^j\equiv 0$ for $j\geq 2$. Then
\[
 \| \sigma(h) \|_{L_2^0}^2 = \| \sigma^1(h)\|_{\beta}^2\leq\int_0^\infty (\mathrm{e}^{-2\beta x}) \mathrm{e}^{\beta x} dx=\frac{1}{\beta} =:M
\]
and since $\min(a,b_1)-\min(a,b_2)\leq |b_1-b_2|$ for $a,b_1,b_2\in\mathbb{R}_+$, we also have
\begin{align*}
\| \sigma(h_1)-\sigma(h_2) \|_{L_2^0}^2& = \| \sigma^1(h)-\sigma^1(h_2)\|_{\beta}^2\\
&=\int_0^\infty (\min(\mathrm{e}^{-\beta x},|h_1'(x)|)-\min(\mathrm{e}^{-\beta x},|h_2'(x)|))^2 \mathrm{e}^{\beta x} dx\\
&\leq \int_0^\infty (h_1'(x)-h_2'(x))^2 \mathrm{e}^{\beta x} dx\\
&\leq \| h_1-h_2\|_{\beta}^2.
\end{align*}
This shows that the Lipschitz condition for $\sigma$ holds, in particular, for $L_{\sigma} = 1$.  
Consequently, by taking $\gamma \equiv 0$, the conditions (B1) -- (B5) are satisfied with $L_{\sigma}=1$ and $L_{\gamma} = 0$ and $M=\frac{1}{\beta}$ for the Lipschitz and growth constants.
By \eqref{Kestimate} we get
\[
 L_F = \frac{1}{\sqrt{\beta^3}}\left(\sqrt{\frac{6\sqrt{2}}{\beta}} +\sqrt{\frac{8}{\beta^3}   + \frac{16}{\beta} } + \sqrt{ \frac{16  (1+\frac{1}{\sqrt{\beta}})^2+48}{(\beta'-\beta)}} \right),
\]
for all $\beta' > \beta$. Choosing $\beta \geq 3$ and $\beta' > \beta$ large enough such that
$L_F < 1$, we find that
\begin{align*}
    2 \sqrt{L_F} + L_{\sigma} + L_{\gamma}
    &< 3 = \beta,
\end{align*}
i.e. \eqref{HJMcontraction condition} is satisfied. 
It is clear that $\sigma(c) \equiv 0$ for each constant function $c$.
Hence Corollary \ref{C: Identifiable Limits for HJM} is applicable.
\end{Example}

\appendix

\section{It\^{o} formula}

Below we recall an It\^{o} formula for Hilbert space valued semimartingales of the form
\[
X(t)=X(0)+\int_0^t a(s) ds + \int_0^t \sigma(s) dW_s+\int_0^{t} \int_E \gamma(s,\nu)\widetilde{N}(ds,d\nu),
\]
where $a$ and $\sigma$ are as before and $(\gamma(t,\nu))_{t\geq 0}$ is a predictable, $H$-valued stochastic process for each $\nu \in E$ such that
\[
 \mathbb{E}\left[\int_0^t \int_E \| \gamma(s,\nu)\|_H^2 \mu(d\nu)ds \right ] <\infty
\]
and 
\[
 \mathbb{E} \left[\int_0^t  \| \sigma(s)\|_{L_2^0}^2 ds \right ]<\infty.
\]
For this purpose we first introduce the class of quasi-sublinear functions.
\begin{Definition}[Sublinear Functions]
A continuous, non-decreasing function $h:\mathbb{R}_+\to\mathbb{R}_+$ is called quasi-sublinear, if there exists a constant $C>0$ such that
\begin{align*}
h(x+y)\leq C (h(x)+h(y))\\
h(xy)\leq C (h(x)h(y))
\end{align*}
for all $x,y\geq 0$.
\end{Definition}
The following It\^{o}-Formula is a combination of \cite{gawarecki2010stochastic} and \cite{Mandrekar2013}. 
\begin{Theorem}[Generalized It\^{o}-Formula]\label{GenIto}
Let $F \in C^2(\mathbb{R}_+\times H,\mathbb{R})$
and suppose there exist quasi-sublinear functions $h_1,h_2:\mathbb{R}_+\to\mathbb{R}_+$ such that for all $t \geq 0$ and $x \in H$
\begin{align*}
 \|F_x(t,x)\|_H \leq h_1(\| x \|_H), \qquad 
 \|F_{xx}(t,x)\|_{L(H,L(H,\mathbb{R}))} \leq h_2(\| x \|_H)
\end{align*}
and 
\begin{align*}
 \int_0^t\int_E \| \gamma(s,\nu)\|_H^2 \mu(d\nu)ds + \int_0^t\int_E h_1(\| \gamma(s,\nu)\|_H)^2 \| \gamma(s,\nu)\|_H^2 \mu(d\nu)ds
 \\ +\int_0^t\int_E h_2(\| \gamma(s,\nu)\|_H) \| \gamma(s,\nu)\|_H^2 \mu(d\nu)ds < \infty
\end{align*}
Then $\mathbb{P}$-almost surely for each $t \geq 0$:
\begin{align*}
\int_0^t \| F_t(s,X(s))\|_H ds +
\int_0^t\int_E \vert F(s,X(s)+\gamma(s,\nu))-F(s,X(s))\vert^2 \mu(d\nu)ds
\\ + \int_0^t\int_E \vert F(s,X(s)+\gamma(s,\nu))-F(s,X(s))-\langle F_x(s,X(s)),\gamma(s,\nu)\rangle_H)\vert \mu(d\nu)ds<\infty.
\end{align*}
Moreover, the generalized It\^{o}-formula holds $\mathbb{P}$-almost surely for each $t \geq 0$ and 
\begin{align*}
 F(t,X(t)) &= F(0,X(0)) + \int_0^t \mathcal{L}F(s,X(s)) ds
 \\ &+ \int_0^t \langle  F_x(s,X(s)), \sigma(s) dW_s \rangle_H
 \\ &\ \ \ + \int_0^{t+}\int_E \left\{ F(s,X(s-)+\gamma(s,\nu)) - F(s,X(s-)) \right\}\widetilde{N}(ds, d\nu)
\end{align*}
where $\mathcal{L}F(x,X(s))$ is given by
\begin{align*}
   &\ \mathcal{L}F(s,X(s)) 
   \\ &= \int_0^t \left\{ F_t(s,X(s)) + \langle F_x(s,X(s)),a(s)\rangle_H \right\}ds
   \\ &\ \ \ + \frac{1}{2}\int_0^t  \mathrm{tr}\left[F_{xx}(s,X(s))\sigma(s)Q\sigma(s)^{*}\right] ds
   \\ &\ \ \ + \int_0^t \int_E \left\{ F(s,X(s)+\gamma(s,\nu))-F(s,X(s))-\langle F_x(s,X(s)),\gamma(s,\nu)\rangle_H \right\} \mu(d\nu)ds
\end{align*}
\end{Theorem}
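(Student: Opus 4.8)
The plan is to establish the two assertions separately: first the integrability statement, which guarantees that every integral appearing in the formula is well defined, and then the formula itself, which I would obtain by combining the It\^o formula for the continuous part from \cite{gawarecki2010stochastic} with the one for the jump part from \cite{Mandrekar2013}. The only genuinely non-standard input is the control of the jump terms, and this is precisely where the quasi-sublinearity of $h_1,h_2$ together with the integrability of $\gamma$ enters.

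For the finiteness statement I would argue path-wise. Since $X$ is c\`adl\`ag, its trajectory is bounded on $[0,t]$, so $R:=\sup_{s\in[0,t]}\|X(s)\|_H<\infty$ almost surely, and the continuous functions $F_t,F_x,F_{xx}$ are bounded along the path; in particular $\int_0^t|F_t(s,X(s))|\,ds<\infty$. For the two jump integrals I would use the mean-value estimate
\[
 |F(s,X(s)+\gamma(s,\nu))-F(s,X(s))|\le \|\gamma(s,\nu)\|_H\, h_1\!\left(\|X(s)\|_H+\|\gamma(s,\nu)\|_H\right)
\]
and the second-order Taylor estimate
\[
 |F(s,X(s)+\gamma)-F(s,X(s))-\langle F_x(s,X(s)),\gamma\rangle_H|\le \tfrac12\|\gamma\|_H^2\, h_2\!\left(\|X(s)\|_H+\|\gamma\|_H\right),
\]
and then apply $h_i(a+b)\le C(h_i(a)+h_i(b))$ to split each right-hand side into a factor depending on $\|X(s)\|_H$, bounded path-wise by $h_i(R)$, times $\|\gamma\|_H^2$, plus a pure jump term $h_1(\|\gamma\|_H)^2\|\gamma\|_H^2$, respectively $h_2(\|\gamma\|_H)\|\gamma\|_H^2$. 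The first pieces are integrable because $\int_0^t\int_E\|\gamma\|_H^2\,\mu(d\nu)\,ds<\infty$, and the second pieces by the two remaining hypotheses on $\gamma$; this yields all three finiteness claims.

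Granting the finiteness just shown, together with the growth bounds $\|F_x\|\le h_1$, $\|F_{xx}\|\le h_2$ and the integrability of $a$ and $\sigma$, the drift integral $\int_0^t\langle F_x(X),a\rangle_H\,ds$, the trace integral, the Wiener integral $\int_0^t\langle F_x(X),\sigma\,dW_s\rangle_H$ and both jump integrals are well defined. I would then write $X=X^c+X^d$ with $X^c(t)=X(0)+\int_0^t a\,ds+\int_0^t\sigma\,dW_s$ and $X^d$ the compensated Poisson integral: the contribution of $X^c$ is the It\^o formula of \cite{gawarecki2010stochastic}, while the compensated integral together with the compensator term in $\mathcal{L}F$ is supplied by \cite{Mandrekar2013}. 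Since $W$ and $N$ are independent there is no mixed quadratic-covariation term, so the two contributions simply add to give the stated formula.

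The hard part will be bridging from the bounded-derivative setting, in which these references apply most directly, to the present quasi-sublinear growth. I would localize by $\tau_n:=\inf\{t\ge0:\|X(t)\|_H>n\}$, which increases to $\infty$ almost surely, and apply the formula to $F$ times a smooth cutoff equal to $1$ on $\{\|x\|_H\le n\}$, so that on $[0,t]$ with $n>R$ the continuous terms already coincide with those for $F$. The delicate point is the jump part, where $X(s-)+\gamma$ may leave the ball of radius $n$: here I would pass to the limit in the compensator integral by dominated convergence, the dominating function being the second-order bound from the finiteness step, while on the set where the cutoff differs from $1$ the jumps are large and hence controlled by $\int_0^t\int_E\|\gamma\|_H^2\,\mu(d\nu)\,ds<\infty$ via Chebyshev; the compensated Poisson integral converges in $L^2$ by the It\^o isometry and the squared-difference bound, along an almost surely convergent subsequence. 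This limiting argument for the jump integrals, rather than any algebraic identity, is the main obstacle.
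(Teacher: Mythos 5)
The paper does not actually prove this theorem: it is stated in the appendix with the one-line justification that it ``is a combination of \cite{gawarecki2010stochastic} and \cite{Mandrekar2013}'', so there is no internal argument to compare yours against. Your reconstruction of the first half is correct and complete: the path-wise bound $R=\sup_{s\le t}\|X(s)\|_H<\infty$ (the range of a c\`adl\`ag path on $[0,t]$ is relatively compact, so $F_t$ is bounded along it), the mean-value and second-order Taylor estimates, and the splitting $h_i(\|X(s)\|_H+\|\gamma\|_H)\le C\bigl(h_i(R)+h_i(\|\gamma\|_H)\bigr)$ via quasi-sublinearity and monotonicity reduce all three finiteness claims exactly to the three integrability hypotheses on $\gamma$. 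This is precisely the role the quasi-sublinearity assumption is designed to play.

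The genuine gap is in the second half, in the step ``write $X=X^c+X^d$ \dots\ the two contributions simply add.'' For a nonlinear $F$ one cannot apply an It\^o formula to $F(t,X^c(t))$ and to $F(t,X^d(t))$ separately and sum the results to obtain $F(t,X^c(t)+X^d(t))$; the additive structure of the \emph{final} formula comes from the vanishing of the cross quadratic variation $[X^c,X^d]$ inside a single Taylor expansion of $F$ along a partition of $[0,t]$ (or, equivalently, from invoking one It\^o formula valid for the full jump-diffusion semimartingale, which is what \cite{Mandrekar2013} supplies), not from superposing two formulas for the two pieces. As written, the decomposition step is not a valid derivation, even though the target identity is correct. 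The localization you describe (stopping times $\tau_n$, smooth cutoff, dominated convergence in the compensator integral using the second-order bound, $L^2$-convergence of the compensated Poisson integral via the isometry) is the right way to pass from bounded derivatives to quasi-sublinear growth, and you correctly identify the jumps that exit the ball of radius $n$ as the delicate point; but the core identity must be taken from (or proved as in) a single semimartingale It\^o formula rather than assembled by addition.
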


\subsection*{Acknowledgements}
Dennis Schroers was funded within the project STORM: Stochastics for Time-Space Risk Models, from the Research Council of Norway (RCN). Project number: 274410.

\bibliographystyle{amsplain}
\addcontentsline{toc}{section}{\refname}\bibliography{references}

\end{document}